\newtheorem{theorem}{Theorem}[section]
\newtheorem{corollary}[theorem]{Corollary}
\newtheorem{definition}[theorem]{Definition}
\newtheorem{lemma}[theorem]{Lemma}
\newtheorem{proposition}[theorem]{Proposition}
\theoremstyle{remark}
\newtheorem{remark}[theorem]{Remark}
\newcommand{\vanish}[1]{}\parskip=12pt
\newcommand{\sP}{\widetilde{P}}
\newcommand{\ssP}{R}
\newcommand{\adn}{\widetilde{d}}
\begin{document}
\title{Shifted Jacobi polynomials and Delannoy numbers} 
\author{G\'abor Hetyei}
\address{Department of Mathematics and Statistics, UNC Charlotte, 
	Charlotte, NC 28223}
\email{ghetyei@uncc.edu}
\dedicatory{\`A la m\'emoire de Pierre Leroux}
\thanks{This work was supported by the NSA grant
\# H98230-07-1-0073.}
\subjclass{Primary 05E35; Secondary 33C45, 05A15}
\keywords{Legendre polynomials, Delannoy numbers, Jacobi polynomials, rook
polynomials, Laguerre polynomials, Romanovski polynomials,
Narayana polynomials}  
\begin{abstract}
We express a weighted generalization of the Delannoy numbers in terms of
shifted Jacobi polynomials. A specialization of our formulas extends  
a relation between the central Delannoy numbers and 
Legendre polynomials, observed over 50 years
ago, to all Delannoy numbers and certain Jacobi
polynomials. Another specialization provides a weighted lattice path
enumeration model for shifted Jacobi polynomials, we use this to 
present a new combinatorial proof of the orthogonality
of Jacobi polynomials with natural number parameters. The proof relates the
orthogonality of these polynomials to the orthogonality of (generalized)
Laguerre polynomials, as they arise in the theory of rook
polynomials. We provide a combinatorial proof for the orthogonality 
of certain Romanovski-Jacobi polynomials with zero first parameter and
negative integer second parameter, considered as an initial segment 
of the list of similarly transformed Jacobi polynomials with the same
parameters. 
We observe that for an odd second parameter one more polynomial may be added to
this finite orthogonal polynomial sequence than what was predicted by a
classical result of Askey and Romanovski. The remaining transformed Jacobi
polynomials in the sequence are either equal to the already listed
Romanovski-Jacobi polynomials or monomial multiples of similarly
transformed Jacobi polynomials with a positive second parameter.    
We provide expressions for an analogous weighted generalization of the
Schr\"oder numbers in terms of the Jacobi polynomials, and use 
this model, together with a result of Mansour and Sun, to express 
the Narayana polynomials in terms of shifted Jacobi polynomials.
\end{abstract}
\maketitle

\section*{Introduction}
It has been noted more than fifty years ago~\cite{Good,Lawden,Moser},
that the diagonal entries of the Delannoy 
array $(d_{m,n})$, introduced by Henri Delannoy~\cite{Delannoy}, and the
Legendre polynomials $P_{n}(x)$ satisfy the equality 
\begin{equation}
\label{E_dp}
d_{n,n}=P_n(3),
\end{equation}
but this relation was mostly considered a ``coincidence''. The 
main result of our present work is that a weighted lattice path
enumeration model may be used to show that (\ref{E_dp}) can 
be extended to 
\begin{equation}
\label{E_dp1}
d_{n+\alpha,n}=P^{(\alpha,0)}_n(3) \quad\mbox{for all $\alpha\in {\mathbb
    Z}$ such that $\alpha\geq -n$,}
\end{equation}
where $P^{(\alpha,0)}_n(x)$ is the Jacobi polynomial with parameters
$(\alpha,0)$. This result indicates that
the interaction between Jacobi polynomials (generalizing Legendre
polynomials) and the Delannoy numbers is more than a mere
coincidence. We then use this model to
provide a new combinatorial proof of the orthogonality of the
Jacobi polynomials with nonnegative integer parameters. The first
combinatorial proof of this orthogonality is due to Foata and
Zeilberger~\cite{Foata-Zeilberger}, who provided a combinatorial model
for the weighted integral of an arbitrary product of Jacobi polynomials
with arbitrary parameters $\alpha, \beta$. The weighted lattice path
enumeration model presented here is used to prove orthogonality only,
and we need to assume $\alpha,\beta\in {\mathbb N}$. In exchange,  
our model is ``more combinatorial'' in the sense that the parameters
$\alpha,\beta\in {\mathbb N}$ also ``count something'' and are not just
``mere weights''. Using our model we may also consider Jacobi
polynomials with negative integer parameters and show that, 
for a positive integer $\beta$, the first $\beta-1$ entries in the sequence
of Jacobi polynomials $\{P^{(0,-\beta)}_n(x)\}_{n\geq 0}$ form a
symmetric sequence, while the entries of degree at least $\beta$ are monomial
multiples of Jacobi polynomials of the form $P^{(0,\beta)}_{n-\beta}(x)$. 
The first $\lfloor (\beta-1)/2\rfloor$ entries in this sequence form a 
finite sequence of orthogonal polynomials, discovered by
Romanovski~\cite{Romanovski}, although he and Askey~\cite{Askey} made
the statement in greater generality but about the first $\lfloor
(\beta-2)/2\rfloor$ entries only.    
We observe that in this special case, for odd $\beta$ the sequence may
be extended by an additional entry, and provide a combinatorial proof
of this amended statement. Finally, by extending our model to Schr\"oder paths,
we show that ordinary Schr\"oder
numbers too may be obtained by substituting $3$ into an appropriate
Jacobi polynomial, and prove a formula for iterated antiderivatives of
the shifted Legendre polynomials. This formula, together with a result
of Mansour and Sun~\cite{Mansour-Sun}, allows us to express the Narayana
polynomials in terms of shifted Jacobi polynomials. 

Our paper is structured as follows. In Section~\ref{s_wdn} we introduce
weighted Delannoy paths associating weight $u$ to each east
step, weight $v$ to each north step and weight $w$ to
each northeast step. This model was first considered by Fray and
Roselle~\cite{Fray-Roselle}. Using their formulas, we show that the
total weight of all Delannoy paths in a rectangle may be
expressed by substitution into a shifted Jacobi polynomial with the
appropriate parameters. The shifting is achieved by replacing $x$ with
$2x-1$ in the definition of the Jacobi polynomial.  

A specific substitution into the parameters $u$, $v$, $w$
yields the shifted Jacobi polynomials themselves. Using this observation, in
Section~\ref{s_orth} we present our lattice path enumeration proof for the
orthogonality of the Jacobi polynomials $P^{(\alpha,\beta)}_n(x)$ when
$\alpha,\beta\in {\mathbb N}$. The proof reduces the
weight enumeration to a known formula in the classical theory of
rook polynomials, used to show the orthogonality of the
(generalized) Laguerre polynomials. Further discussion of this
connection may be found in Section~\ref{s_rook}.

The discussion of the properties of the sequence of Jacobi polynomials
$\{P^{(0,-\beta)}_n(x)\}_{n\geq 0}$ is in
Section~\ref{s_bnneg}, while the results associated to our weighted 
generalization of Schr\"oder numbers may be found in in Section~\ref{s_S}. 
The paper inspires as many questions as it answers, some of which are
listed in the concluding Section~\ref{sec:concl}.

\section{Preliminaries}
\subsection{Delannoy numbers}

The {\em Delannoy array} $(d_{i,j}: i,j\in {\mathbb Z})$
was introduced by Henri Delannoy~\cite{Delannoy}.
It may be defined by the recursion formula 
\begin{equation}
\label{E_Drec}
d_{i,j} = d_{i-1,j} + d_{i,j-1} + d_{i-1,j-1}
\end{equation}
with  the conditions $d_{0,0} = 1$ and $ d_{i,j} = 0 $ if $i<0$ or $j<0$.
The historic significance of these numbers is explained in the paper
``Why Delannoy numbers?''\cite{Banderier-Schwer} by 
Banderier and Schwer. The diagonal elements $(d_{n,n}: n\geq 0)$ in
this array are the {\em central Delannoy numbers} (A001850 of Sloane
\cite{Sloane}). These numbers are known through the books of
Comtet~\cite{Comtet} and Stanley~\cite{Stanley-EC2}, but it is Sulanke's
paper~\cite{Sulanke} that gives the most complete enumeration of all known
uses of the central Delannoy numbers. 

\subsection{Jacobi, Legendre and Romanovski polynomials}

The $n$-th Jacobi polynomial $P_n^{(\alpha,\beta)}(x)$  of type
$(\alpha,\beta)$ is defined as 
$$
P_n^{(\alpha,\beta)}(x)=(-2)^{-n}(n!)^{-1} (1-x)^{-\alpha}(1+x)^{-\beta}
\frac{d^n}{dx^n}\left((1-x)^{n+\alpha}(1+x)^{n+\beta}\right).
$$
If $\alpha$ and $\beta$ are real numbers satisfying $\alpha,\beta>-1$,
then the polynomials $\{P_n^{(\alpha,\beta)}(x)\}_{n\geq 0}$  form an
orthogonal basis with respect to the inner product 
$$\langle f, g\rangle :=\int_{-1}^1 f(x)\cdot g(x)\cdot
(1-x)^{\alpha}(1+x)^{\beta}\ dx.$$ 
The following formula, stated only slightly differently in
\cite[(4.21.2)]{Szego}, may be used to extend the definition of
$P_n^{(\alpha,\beta)}(x)$ to arbitrary complex values of $\alpha$ and
$\beta$:  
\begin{equation}
\label{E_J1}
P_n^{(\alpha,\beta)}(x)
=\sum_{j=0}^n\binom{n+\alpha+\beta+j}{j}\binom{n+\alpha}{n-j} 
\left(\frac{x-1}{2}\right)^j.
\end{equation}
The following relation provides a way to ``swap'' the parameters
$\alpha$ and $\beta$ (see~\cite[Ch.\ V, (2.8)]{Chihara}).
\begin{equation}
\label{E_jswap}
(-1)^n P_n^{(\alpha,\beta)}(-x)=P_n^{(\beta,\alpha)}(x).
\end{equation}

The Legendre polynomials $\{P_n(x)\}_{n\geq 0}$ are the Jacobi polynomials
$\{P_n^{(0,0)}(x)\}_{n\geq 0}$. They form an orthogonal  basis with
respect to the inner product 
$$\langle f, g\rangle :=\int_{-1}^1 f(x)\cdot g(x)\ dx.$$ 
Substituting $\alpha=\beta=0$ into (\ref{E_J1}) yields
\begin{equation}
\label{E_P1}
P_n(x)
=\sum_{j=0}^n\binom{n+j}{j}\binom{n}{n-j} 
\left(\frac{x-1}{2}\right)^j.
\end{equation}
As a specialization of (\ref{E_jswap}) we obtain
\begin{equation}
\label{E_pswap}
(-1)^n P_n(-x)=P_n(x).
\end{equation}
The shifted Legendre polynomials $\sP_n(x)$ are defined by the linear 
substitution
$$
\sP_n(x):=P_n(2x-1).
$$
They form an orthogonal basis with respect to the inner product 
$$\langle f, g\rangle :=\int_{0}^1 f(x)\cdot g(x)\ dx.$$ 
They may be calculated using the following formula:
\begin{equation}
\label{E_sP}
\sP_n(x)
=\sum_{k=0}^n (-1)^{n-k} \binom{n}{k}\binom{n+k}{k} x^k
=\sum_{k=0}^n (-1)^{n-k}\binom{n+k}{n-k}\binom{2k}{k} x^k.
\end{equation}
A generalization of (\ref{E_sP}) is stated and shown in
Proposition~\ref{P_sj}. Shifted Legendre polynomials are widely used,
they even have a table entry in the venerable opus of Abramowitz
and Stegun~\cite[22.2.11]{Abramowitz-Stegun}. Their obvious
generalization, the {\em shifted Jacobi
  polynomials $\sP^{(\alpha,\beta)}_n(x)$}, defined by the formula
\begin{equation}
\label{P_sjdef}
\sP^{(\alpha,\beta)}_n(x):=P^{(\alpha,\beta)}_n(2x-1),
\end{equation} 
seem to receive rare mention by this name, a sample reference
is the work of Gatteschi~\cite{Gatteschi}. On the other hand, a finite
sequence of orthogonal polynomials, discovered by
Romanovski~\cite[p.\ 1025]{Romanovski} is essentially the same as a
slightly differently shifted sequence of Jacobi
polynomials. Romanovski's result  
was rephrased by Askey~\cite[(1.15) and (1.16)]{Askey} as follows.
\begin{theorem}[Askey-Romanovski]
\label{thm:Askey-Romanovski}
When $\alpha>-1$ and $\alpha+\beta+N+1<0$ then for $0\leq m,n\leq N/2$
the polynomials 
$$
\ssP_n^{(\alpha,\beta)}(x)=\frac{(\alpha+1)_n}{n!}
{}_2F_1(-n,n+\alpha+\beta+1; \alpha+1; x)
$$
satisfy the orthogonality relation 
$$
\int_0^{\infty} \ssP_n^{(\alpha,\beta)}(x) R_m^{(\alpha,\beta)}(x) x^{\alpha}(1+x)^{\beta}\ dx =
\frac{(-1)^{n+1} \Gamma(-n-\alpha-\beta) \Gamma(n+\alpha+1) (\beta+1)_n
\cdot \delta_{m,n}}{(2n+\alpha+\beta+1) n! \Gamma(-\beta)}.
$$

\end{theorem}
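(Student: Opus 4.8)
The plan is to treat $\{R_n^{(\alpha,\beta)}\}$ as the classical orthogonal polynomials attached to the weight $w(x)=x^\alpha(1+x)^\beta$ on $(0,\infty)$, and to prove orthogonality by a Rodrigues formula followed by repeated integration by parts; the two hypotheses $\alpha>-1$ and $\alpha+\beta+N+1<0$ will turn out to be exactly the conditions that make the boundary terms vanish at the two endpoints. First I would record that $w$ satisfies a Pearson equation $(\sigma w)'=\tau w$ with $\sigma(x)=x(1+x)$ of degree two and $\tau(x)=(1+\alpha)+(2+\alpha+\beta)x$ of degree one, the signature of a hypergeometric-type weight. This guarantees a Rodrigues representation
$$
R_n^{(\alpha,\beta)}(x)=\frac{B_n}{x^\alpha(1+x)^\beta}\,
\frac{d^n}{dx^n}\!\left(x^{n+\alpha}(1+x)^{n+\beta}\right)
$$
for a suitable constant $B_n$, which I would pin down by matching leading coefficients against the ${}_2F_1$ definition. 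As an orienting remark inside the present framework, combining the ${}_2F_1$ form underlying (\ref{E_J1}) with the swap relation (\ref{E_jswap}) identifies $R_n^{(\alpha,\beta)}(x)=(-1)^nP_n^{(\beta,\alpha)}(2x-1)=(-1)^n\sP_n^{(\beta,\alpha)}(x)$; this names the polynomials within the paper's notation, but orthogonality must still be established on $(0,\infty)$ rather than on $(0,1)$.

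With the Rodrigues formula in hand, for $m<n$ I would write
$$
\int_0^\infty R_n^{(\alpha,\beta)}(x)\,R_m^{(\alpha,\beta)}(x)\,w(x)\,dx
=B_n\int_0^\infty R_m^{(\alpha,\beta)}(x)\,
\frac{d^n}{dx^n}\!\left(x^{n+\alpha}(1+x)^{n+\beta}\right)dx
$$
and integrate by parts $n$ times, moving all derivatives onto $R_m^{(\alpha,\beta)}$. Since $\deg R_m^{(\alpha,\beta)}=m<n$, the surviving integral contains $\frac{d^n}{dx^n}R_m^{(\alpha,\beta)}\equiv 0$, so the whole expression collapses to the accumulated boundary contributions. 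The proof therefore hinges on showing that every boundary term $\left[R_m^{(j)}(x)\,\frac{d^{\,n-1-j}}{dx^{\,n-1-j}}\bigl(x^{n+\alpha}(1+x)^{n+\beta}\bigr)\right]_0^\infty$, for $0\le j\le n-1$, vanishes.

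Verifying this vanishing is the \emph{main obstacle}, and it is where the hypotheses enter. Near $x=0$ the bracketed derivative behaves like $x^{\,n+\alpha-(n-1-j)}=x^{\,\alpha+1+j}$, which tends to $0$ precisely because $\alpha>-1$ forces $\alpha+1+j>0$ for every $j\ge0$. Near $x=\infty$ the derivative of $x^{n+\alpha}(1+x)^{n+\beta}$ of order $n-1-j$ grows like $x^{\,n+\alpha+\beta+1+j}$, while $R_m^{(j)}$ grows like $x^{\,m-j}$, so the product grows like $x^{\,m+n+\alpha+\beta+1}$; since $m,n\le N/2$ we have $m+n\le N$, and the hypothesis $\alpha+\beta+N+1<0$ makes this exponent negative, killing the term. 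Thus both endpoints are controlled exactly by the two stated inequalities, and the off-diagonal integrals vanish.

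For the diagonal case $m=n$ the same $n$-fold integration by parts leaves
$$
\int_0^\infty \bigl(R_n^{(\alpha,\beta)}(x)\bigr)^2 w(x)\,dx
=(-1)^n B_n\,k_n\,n!\int_0^\infty x^{n+\alpha}(1+x)^{n+\beta}\,dx,
$$
where $k_n$ is the leading coefficient of $R_n^{(\alpha,\beta)}$, again read off from the ${}_2F_1$ definition. The remaining integral is a Beta integral, $\int_0^\infty x^{a-1}(1+x)^{-a-b}\,dx=\Gamma(a)\Gamma(b)/\Gamma(a+b)$ with $a=n+\alpha+1$ and $b=-2n-\alpha-\beta-1$, both positive by $\alpha>-1$ together with $2n\le N$ and $\alpha+\beta+N+1<0$. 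Substituting this evaluation and simplifying the Pochhammer and Gamma factors should reproduce the stated constant
$$
\frac{(-1)^{n+1}\Gamma(-n-\alpha-\beta)\,\Gamma(n+\alpha+1)\,(\beta+1)_n}
{(2n+\alpha+\beta+1)\,n!\,\Gamma(-\beta)};
$$
I expect this final step to be purely mechanical, the only care being the bookkeeping of Gamma-function reflection and shift identities.
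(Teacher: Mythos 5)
The paper itself offers no proof of Theorem~\ref{thm:Askey-Romanovski}: it is imported verbatim from Askey's rephrasing of Romanovski, and the only orthogonality the paper actually establishes by its own methods is the special case $\alpha=0$, $\beta$ a negative integer, done combinatorially in Theorem~\ref{T_bneg}. Your proof is therefore necessarily a different route --- the classical analytic one via the Pearson equation, a Rodrigues formula, $n$-fold integration by parts, and a Beta integral --- and it is the right one for the general real-parameter statement. The structure is sound: the boundary analysis is exactly where the two hypotheses live ($\alpha>-1$ kills the terms at $0$, $\alpha+\beta+N+1<0$ kills them at $\infty$ and also makes the final Beta integral converge), and I have checked that your diagonal computation, with $B_n=(-1)^n/n!$ and $k_n=(-1)^n(n+\alpha+\beta+1)_n/n!$, does reduce via the reflection/shift identities for $\Gamma$ to the constant displayed in the theorem. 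Compared with the paper's combinatorial argument for the $\alpha=0$ case, your proof buys full generality in $\alpha,\beta$ at the price of being non-combinatorial; the paper's Theorem~\ref{T_bneg} buys, in its narrow case, the extra polynomial at $n=(\beta-1)/2$ for odd $\beta$, which your argument (like Askey's) does not reach because the Beta integral for the norm of that top polynomial diverges.

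There is one genuine gap you should close. The Pearson equation guarantees that the Rodrigues expression $\frac{B_n}{w}\frac{d^n}{dx^n}\left(x^{n+\alpha}(1+x)^{n+\beta}\right)$ is a polynomial of degree $n$ and that \emph{those} polynomials are orthogonal; it does not by itself identify them with the ${}_2F_1$-defined $\ssP_n^{(\alpha,\beta)}$, and matching leading coefficients only fixes $B_n$ \emph{after} proportionality is known. You need either a Leibniz expansion of the $n$-th derivative compared coefficient-by-coefficient with the hypergeometric series, or the observation that both polynomials solve the same second-order hypergeometric equation $\sigma y''+\tau y'+n(n+\alpha+\beta+1)y=0$, whose polynomial solution of degree $n$ is unique up to a scalar. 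When you carry this out you will find the Rodrigues polynomial is proportional to $P_n^{(\alpha,\beta)}(2x+1)$, i.e.\ to $\ssP_n^{(\alpha,\beta)}(x)$ in the sense of \eqref{eq:ssPP}, which corresponds to the ${}_2F_1$ evaluated at $-x$; your parenthetical identification $\ssP_n^{(\alpha,\beta)}(x)=(-1)^n\sP_n^{(\beta,\alpha)}(x)$ follows the literal displayed argument $+x$ and is off by the substitution $x\mapsto -x$. For this weight on $(0,\infty)$ that reflection is not harmless: for $\alpha=0$, $\beta=-6$, $n=1$ the literal formula gives $1+4x$, which is \emph{not} orthogonal to $1$ against $(1+x)^{-6}$, whereas $\ssP_1^{(0,-6)}(x)=1-4x$ is. So the family your Rodrigues argument proves orthogonal is the intended one of \eqref{eq:ssPP} and \eqref{E_ssP}, and you should anchor the identification step there rather than to the sign convention of the displayed ${}_2F_1$.
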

Here and in the rest of this paper, $\delta_{m,n}$ is the Kronecker
delta, $(\alpha+1)_n=(\alpha+1)(\alpha+2)\cdots (\alpha+n)$ is the {\em
  Pochhammer symbol} and $\Gamma(z)$ is the gamma
function~\cite[6.1.1]{Abramowitz-Stegun}. The the Gaussian hypergeometric
function ${}_2F_1$ will not used beyond the end of this subsection. 

Comparing the above definition of $\ssP_n^{(\alpha,\beta)}(x)$ with the
analogous formula for the Jacobi polynomials found in the database
created by Koekoek and Swarttouw~\cite[(1.8.1)]{Koekoek-Swarttouw} we obtain 
\begin{equation}
\label{eq:ssPP}
\ssP_n^{(\alpha,\beta)}(x)=P_n^{(\alpha,\beta)}(2x+1).
\end{equation}
This relation was noted by Chen and Srivastava~\cite[(4.7)]{Chen-Srivastava}.
\begin{remark}
In his very concise note, Romanovski~\cite{Romanovski} considered
several finite sequences of orthogonal polynomials. Not all finite
orthogonal polynomial sequences called ``Romanovski polynomials'' in the
literature coincide with the polynomials $\ssP_n^{(\alpha,\beta)}(x)$
above. To avoid confusion, Lesky~\cite{Lesky} distinguishes between 
Romanovski-Bessel, Romanovski-Jacobi and Romanovski-Pseudo Jacobi
polynomials. The polynomials $\ssP_n^{(\alpha,\beta)}(x)$ above are
examples of Romanovski-Jacobi polynomials. 
\end{remark}

\subsection{Favard's theorem}

Our main reference for the basic facts on orthogonal polynomials is
Chihara's book~\cite{Chihara}. A {\em moment functional} ${\mathcal L}$
is a linear map ${\mathbb C}[x]\rightarrow {\mathbb C}$. A sequence of
polynomials $\{p_n (x)\}_{n=0}^{\infty}$ is an {\em orthogonal polynomial
sequence} with respect to ${\mathcal L}$ if $p_n(x)$ has degree $n$, 
${\mathcal L}(p_m(x)p_n(x))=0$ for $m\neq n$, and ${\mathcal
  L}(p_n^2 (x))\neq 0$ for all $n$. Such a sequence exists if and only if
${\mathcal L}$ is {\em quasi-definite} (see~\cite[Ch. I, Theorem
  3.1]{Chihara},  the term quasi-definite is introduced in~\cite[Ch. I,
  Definition 3.2]{Chihara}). Whenever an orthogonal polynomial sequence
exists, each of its elements is determined up to a non-zero constant
factor (see \cite[Ch. I, Corollary of Theorem 2.2]{Chihara}).  

A way to verify whether a sequence of polynomials is orthogonal is
Favard's theorem~\cite[Ch.\ I, Theorem 4.4]{Chihara}. This states
that a sequence of monic polynomials $\{p_n(x)\}_{n\geq 0}$ is an
orthogonal polynomial sequence,
if and only if it satisfies the recurrence formula  
\begin{equation}
\label{E_Favrec}
p_n(x)=(x-c_n)p_{n-1}(x)-\lambda_n p_{n-2}(x) \quad\mbox{$n=1,2,3,\ldots$}
\end{equation}
where $p_{-1}(x)=0$, $p_0(x)=1$, the numbers $c_n$ and
$\lambda_n$ are constants, $\lambda_n\neq 0$ for $n\geq 2$, and
$\lambda_1$ is arbitrary (see \cite[Ch. I, Theorem 4.1]{Chihara}).
Conversely, for every
sequence of monic polynomials defined in the above way there is a unique
quasi-definite moment functional ${\mathcal L}$ such that ${\mathcal
  L}(1)=\lambda_1$ and $\{P_n (x)\}_{n=0}^{\infty}$  is the monic orthogonal polynomial sequence
with respect to ${\mathcal L}$. The original proof provides only a
recursive description of ${\mathcal L}$ for a given $\{p_n(x)\}_{n\geq
  0}$ satisfying (\ref{E_Favrec}). Viennot~\cite{Viennot} 
gave a combinatorial proof of Favard's theorem, upon
which he has built a general combinatorial theory of orthogonal
polynomials. In his theory, the values ${\mathcal L}(x^n)$ are explicitly
given as sums of weighted {\em Motzkin paths}. 

\subsection{Central Delannoy numbers and Legendre polynomials}
\label{s_Legendre}

Equation (\ref{E_dp}) linking the central Delannoy numbers to the 
Legendre polynomials has been known for over 50 years~\cite{Good},
\cite{Lawden}, \cite{Moser}. Until recently there
was a consensus that this link is not very relevant.
Banderier and Schwer~\cite{Banderier-Schwer} note
that there is no ``natural'' correspondence between Legendre polynomials
and the original lattice path enumeration problem associated to the
Delannoy array, while Sulanke~\cite{Sulanke} states that
``the definition of Legendre polynomials does not appear to foster any
combinatorial interpretation leading to enumeration''. 

The present author made two attempts to provide a combinatorial
explanation. In~\cite{Hetyei-dn} we find a generalization (\ref{E_dp}) 
to substitutions of $3$ into Jacobi polynomials and an asymmetric
variant of the Delannoy array, having the same diagonal
elements. In~\cite{Hetyei-lp} we find a construction of a polytope whose
face numbers are the coefficients of the powers of $(x-1)/2$ in (\ref{E_P1})
and the Delannoy numbers enumerate faces having nonempty intersections
with certain generalized orthants. Both attempts are ``imperfect'' in
some sense: the first does not relate the original Delannoy array to
substitutions into Jacobi polynomials, the second does not involve
lattice path enumeration. The present work overcomes both of these
``imperfections''. 

\subsection{Rook polynomials and orthogonal polynomials}

An excellent short summary of the classical theory of rook polynomials
is given by Gessel in~\cite[Section 2]{Gessel}. For more information we
follow his suggestion and refer the reader to Kaplansky and
Riordan~\cite{Kaplansky-Riordan} and
Riordan~\cite[pp. 163--277]{Riordan}. Let $B$ be a subset of
$[n]\times [n]$, i.e., a {\em board}. Here $[n]$
is a shorthand for $\{1,\ldots,n\}$. A subset $S$ of
$B$ is called {\em compatible} if no two elements of $S$ agree in either
coordinate. The {\em rook polynomial of $B$} is defined as 
$$
r_B(x):=\sum_{k=0}^n (-1)^k r_k x^{n-k}
$$
where $r_k$ is the number of compatible $k$-subsets of $B$. Let ${\mathcal L}$
be the linear functional on polynomials in $x$ defined by
${\mathcal L}(x^n):=n!$. Then 
$$
{\mathcal L}(p(x))=\int_0^{\infty} e^{-x} p(x) \ dx
$$
and the number of permutations $\pi$ of $[n]\times [n]$
such that no $(i,\pi(i))$ belongs to $B$ is ${\mathcal L}(r_B(x))$. 

The rook polynomial of $[n]\times [n]$ is the
{\em Laguerre polynomial}
\begin{equation}
\label{E_Ldef}
l_n(x):=\sum_{k=0}^n (-1)^k \binom{n}{k}^2 k! x^{n-k}
\end{equation}
In terms of the simple Laguerre polynomial as usually normalized,
$$
l_n(x)=(-1)^n n! L_n(x).
$$
It can be shown combinatorially that the Laguerre polynomials form an
orthogonal basis with respect to the inner product induced by ${\mathcal L}$. We have
$$
{\mathcal L}(l_m(x)l_n(x))=\delta_{m,n} n!
$$
Here $\delta_{m,n}$ is the Kronecker delta. 

\section{Weighted Delannoy numbers and shifted Legendre polynomials} 
\label{s_wdn} 

A {\em Delannoy path} is a lattice path using only three
kinds of steps: $(0,1)$ (east), $(1,0)$ (north), and $(1,1)$
(northeast). One of the most plausible generalizations of the Delannoy
array is the following. 
\begin{definition}
\label{D_wdn}
Let $u$, $v$, $w$ be commuting variables. We define the {\em weighted
  Delannoy numbers $d_{m,n}^{u,v,w}$} as the total weight of all
  Delannoy paths from $(0,0)$ to $(m,n)$, where each east step $(0,1)$
  has weight $u$, each north step has weight $v$, and each northeast
  step has weight $w$. The weight of a
  lattice path is the product of the weights of its steps. 
\end{definition}
The usual Delannoy numbers $d_{m,n}$ are obtained by substituting 
$u=v=w=1$. Without any reference to Delannoy numbers, a
short study of the numbers $d_{m,n}^{u,v,w}$ may be found in the
work of Fray and Roselle~\cite[Section 2]{Fray-Roselle}. They use the notation
$f(n,k):=d_{n,k}^{x,y,z}$. The recursion formula (\ref{E_Drec}) of the
ordinary Delannoy numbers generalizes to 
\begin{equation}
\label{E_wDrec}
d_{i,j}^{u,v,w} = u\cdot d_{i-1,j}^{u,v,w} + v\cdot d_{i,j-1}^{u,v,w} +
w\cdot d_{i-1,j-1}^{u,v,w},
\end{equation}
see~\cite[Eq.\ (2.1)]{Fray-Roselle}. 
The weighted Delannoy numbers also satisfy the following explicit
formula~\cite[Eq.\ (2.2)]{Fray-Roselle}, which was shown by Fray and
Roselle using generating functions:
\begin{equation}
\label{E_FR}
d_{m,n}^{u,v,w}=\sum_{k=0}^m \binom{m}{k}\binom{m+n-k}{m} u^{m-k} v^{n-k} w^k.
\end{equation}
Using the identity
$$
\binom{m}{k}\binom{m+n-k}{m}=\binom{m+n-k}{k,m-k,n-k}=
\binom{m+n-k}{k}\binom{m+n-2k}{n-k},
$$
Equation (\ref{E_FR}) may be restated as follows. 
\begin{proposition}[Fray-Roselle]
\label{P_wd}
The weighted Delannoy numbers  are given by 
$$
d_{m,n}^{u,v,w}=\sum_{k=0}^{n}  \binom{m+n-k}{k}\binom{m+n-2k}{n-k} u^{m-k}
v^{n-k} w^k. 
$$
\end{proposition}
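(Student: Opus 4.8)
The plan is to deduce Proposition~\ref{P_wd} directly from the Fray--Roselle formula~(\ref{E_FR}) by rewriting each summand with the displayed three--way binomial identity, so that essentially no new combinatorics is required. First I would verify
$$
\binom{m}{k}\binom{m+n-k}{m}=\binom{m+n-k}{k}\binom{m+n-2k}{n-k}
$$
by expanding both sides into factorials: each equals $\frac{(m+n-k)!}{k!\,(m-k)!\,(n-k)!}$, that is, the multinomial coefficient $\binom{m+n-k}{k,m-k,n-k}$ recorded in the statement. This expansion is entirely routine and is the only computation involved.

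Substituting this identity into the summand of~(\ref{E_FR}) replaces the factor $\binom{m}{k}\binom{m+n-k}{m}$ by $\binom{m+n-k}{k}\binom{m+n-2k}{n-k}$ while leaving the monomial $u^{m-k}v^{n-k}w^k$ untouched, which yields the claimed expression term by term. The only point deserving comment is the change of the upper summation limit from $m$ in~(\ref{E_FR}) to $n$ in the statement. Here I would note that the common factorial form shows that each summand vanishes unless $k$, $m-k$, and $n-k$ are all nonnegative, that is, unless $0\le k\le\min(m,n)$; in particular $\binom{m+n-2k}{n-k}=0$ once $k>n$, and $\binom{m}{k}=0$ once $k>m$. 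Hence the sums over $0\le k\le m$, over $0\le k\le n$, and over $0\le k\le\min(m,n)$ all agree, and one may freely display the upper limit as $n$.

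I do not expect any genuine obstacle: the entire content is the factorial identity above, and the sole subtlety, the adjustment of the index range, is disposed of by the vanishing of the binomial factors outside $[0,\min(m,n)]$.
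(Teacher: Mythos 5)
Your proposal is correct and is essentially the paper's own derivation: the paper likewise obtains Proposition~\ref{P_wd} by rewriting each summand of (\ref{E_FR}) via the identity $\binom{m}{k}\binom{m+n-k}{m}=\binom{m+n-k}{k,m-k,n-k}=\binom{m+n-k}{k}\binom{m+n-2k}{n-k}$ (and also remarks that both formulas follow from the one-line combinatorial count of paths with $k$ northeast steps). Your explicit justification of the change of the upper summation limit from $m$ to $n$ via the vanishing of the binomial factors is a point the paper leaves implicit, and it is handled correctly.
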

Note that Equation (\ref{E_FR}) and Proposition~\ref{P_wd} have the following
one-line proof: a Delannoy path from $(0,0)$ to $(m,n)$ containing $k$
northeast steps must contain $(m-k)$ east and $(n-k)$ north steps, and
these steps may be listed in $\binom{m+n-k}{k,m-k,n-k}$ ways.

Setting $m=n$ in Proposition~\ref{P_wd} yields the following formula for
the weighted central Delannoy numbers:
\begin{equation}
\label{E_wcd}
d_{n,n}^{u,v,w}=\sum_{k=0}^{n}  \binom{2n-k}{k}\binom{2n-2k}{n-k} u^{n-k}
v^{n-k} w^k. 
\end{equation}
Using (\ref{E_sP}) and (\ref{E_wcd}) we may show the following formula.
\begin{lemma}
\label{L_wcd}
The weighted central Delannoy numbers are linked to the shifted Legendre
polynomials by 
$$
d_{n,n}^{u,v,w}=(-w)^n \sP_n\left(-\frac{uv}{w}\right).
$$
\end{lemma}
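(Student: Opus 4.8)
The plan is to verify the identity by direct computation, substituting $x=-uv/w$ into the closed form for $\sP_n(x)$ and matching the result term-by-term against the formula (\ref{E_wcd}) for $d_{n,n}^{u,v,w}$. Since both sides are explicit finite sums, no structural machinery is needed; the entire argument is an exercise in tracking signs and re-indexing binomial coefficients.

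First I would start from the \emph{second} expression for $\sP_n(x)$ recorded in (\ref{E_sP}), namely $\sP_n(x)=\sum_{k=0}^n (-1)^{n-k}\binom{n+k}{n-k}\binom{2k}{k}x^k$, because its binomial coefficients $\binom{n+k}{n-k}$ and $\binom{2k}{k}$ are already of the shape appearing in (\ref{E_wcd}). Substituting $x=-uv/w$ produces a factor $(-1)^k (uv)^k w^{-k}$, and combining the sign $(-1)^{n-k}$ coming from $\sP_n$ with this $(-1)^k$ collapses the $k$-dependence of the sign to a uniform $(-1)^n$.

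Next I would multiply through by $(-w)^n=(-1)^n w^n$. The two factors of $(-1)^n$ cancel, and distributing $w^n$ against $w^{-k}$ leaves $w^{n-k}$, so that $(-w)^n\sP_n(-uv/w)=\sum_{k=0}^n \binom{n+k}{n-k}\binom{2k}{k}(uv)^k w^{n-k}$. Finally, replacing the summation index $k$ by $n-k$ sends $\binom{n+k}{n-k}\mapsto\binom{2n-k}{k}$, $\binom{2k}{k}\mapsto\binom{2n-2k}{n-k}$, and $(uv)^k w^{n-k}\mapsto u^{n-k}v^{n-k}w^k$, reproducing (\ref{E_wcd}) exactly.

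The calculation has no genuine obstacle; the only place that demands care is the bookkeeping in the last step, where one must confirm that both binomial coefficients transform correctly under $k\mapsto n-k$ and that the exponents of $u$, $v$, and $w$ land on the pattern of (\ref{E_wcd}). Choosing the second (rather than the first) form of $\sP_n$ in (\ref{E_sP}) is precisely what makes this matching immediate.
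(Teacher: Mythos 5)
Your computation is correct and is essentially the paper's own proof: both arguments come down to substituting $x=-uv/w$, matching the explicit sum (\ref{E_wcd}) against (\ref{E_sP}) after the reindexing $k\mapsto n-k$. The only (cosmetic) difference is that the paper first extracts the factor $(-w)^n$ by a combinatorial homogeneity observation on the paths, writing $d_{n,n}^{u,v,w}=(-w)^n d_{n,n}^{1,-uv/w,-1}$, whereas you pull it out purely algebraically from the sum.
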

\begin{proof}
In a Delannoy path from $(0,0)$ to $(n,n)$ the total number of north
and northeast steps is $n$. Thus we have 
$$
d_{n,n}^{u,v,w}=(-w)^n d_{n,n}^{u,-v/w,-1}
$$
which is equal to $(-w)^n d_{n,n}^{1,-uv/w,-1}$ since the number of east
steps is the same as the number of north steps.
By (\ref{E_wcd}) we have
$$
d_{n,n}^{1,-uv/w,-1}=\sum_{k=0}^{n}  \binom{2n-k}{k}\binom{2n-2k}{n-k} 
\left(-\frac{uv}{w}\right)^{n-k} (-1)^k,
$$
and the statement follows from (\ref{E_sP}), after replacing $k$ with $n-k$.
\end{proof} 
Substituting $u=v=w=1$ into Lemma~\ref{L_wcd} yields
$$
d_{n,n}=d_{n,n}^{1,1,1}=(-1)^n \sP _n(-1)=(-1)^n\cdot P_n(-3).
$$
Equation (\ref{E_dp}) now follows from the ``swapping rule''
(\ref{E_pswap}).  
 We may use the same rule to rewrite Lemma~\ref{L_wcd} as follows. 
\begin{proposition}
\label{P_wcdp}
The weighted central Delannoy numbers and the shifted Legendre
polynomials satisfy
$$
d_{n,n}^{u,v,w}=w^n \sP_n\left(\frac{uv}{w}+1\right).
$$
\end{proposition}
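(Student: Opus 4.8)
The plan is to derive this directly from Lemma~\ref{L_wcd} by exploiting the Legendre swapping rule (\ref{E_pswap}), exactly as was done a few lines earlier to recover (\ref{E_dp}). Lemma~\ref{L_wcd} already supplies
$$
d_{n,n}^{u,v,w}=(-w)^n \sP_n\left(-\frac{uv}{w}\right),
$$
so the only remaining task is to rewrite $\sP_n(-uv/w)$ in terms of $\sP_n(uv/w+1)$.

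First I would unfold the shift. Since $\sP_n(x)=P_n(2x-1)$, setting $x=-uv/w$ gives $\sP_n(-uv/w)=P_n(-2uv/w-1)$, whereas setting $x=uv/w+1$ gives $\sP_n(uv/w+1)=P_n(2uv/w+1)$. The two unshifted arguments, $-2uv/w-1$ and $2uv/w+1$, are negatives of one another; this is the crucial (and essentially the only) observation, and it is precisely the reason the shift by $1$ appears inside the argument on the right-hand side.

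Next I would invoke (\ref{E_pswap}) with $x=2uv/w+1$, namely $P_n(-x)=(-1)^n P_n(x)$, to obtain $P_n(-2uv/w-1)=(-1)^n P_n(2uv/w+1)$, that is, $\sP_n(-uv/w)=(-1)^n \sP_n(uv/w+1)$. Substituting this into Lemma~\ref{L_wcd} and simplifying $(-w)^n(-1)^n=w^n$ finishes the argument. There is no genuine obstacle here: the entire content lies in the bookkeeping of the linear substitution $x\mapsto 2x-1$ and the factor $(-1)^n$, so the one thing to watch is that the shift and the swap are applied consistently and the sign powers are combined correctly.
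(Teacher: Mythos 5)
Your argument is correct and is essentially identical to the paper's: both derive the shifted swapping rule $(-1)^n\sP_n(-x)=\sP_n(x+1)$ from (\ref{E_pswap}) by unwinding the substitution $x\mapsto 2x-1$, and then combine it with Lemma~\ref{L_wcd}, absorbing $(-1)^n$ into $(-w)^n$. The sign bookkeeping is handled correctly.
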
 
\begin{proof}
By definition, we have
$(-1)^n \sP_n (-x)=(-1)^n P_n(-2x-1)$.
Using (\ref{E_pswap}) we obtain
$$
(-1)^n P_n(-2x-1)=P_n(2x+1)=\sP _n(x+1).
$$
Thus we may rewrite (\ref{E_pswap}) for shifted Legendre polynomials as
\begin{equation}
\label{E_spswap}
(-1)^n
\sP_n(-x)=\sP_n(x+1).
\end{equation}
The statement now follows immediately from Lemma~\ref{L_wcd}.
\end{proof}
\begin{remark}
Using Lemma~\ref{L_wcd} and Proposition~\ref{P_wcdp} we obtain two
infinite sets of weightings yielding the central Delannoy numbers
$d_{n,n}$ as the total weight of all Delannoy paths from $(0,0)$ to
$(n,n)$. Indeed, Lemma~\ref{L_wcd} implies
$$
d_{n,n}=d_{n,n}^{r,1/r,1} \quad\mbox{for all $r\in{\mathbb R}\setminus
  \{0\}$,} 
$$
whereas Proposition~\ref{P_wcdp} yields
$$
d_{n,n}=d_{n,n}^{r,2/r,-1} \quad\mbox{for all $r\in{\mathbb R}\setminus
  \{0\}$.} 
$$
\end{remark}
In order to extend the validity of our formulas to non-central
weighted Delannoy numbers, we prove the following generalization of
Equation (\ref{E_sP}).
\begin{proposition}
\label{P_sj}
For all $\alpha\in {\mathbb N}$ and $\beta\in {\mathbb C}$ we have 
$$
(x-1)^{\alpha}\sP^{(\alpha,\beta)}_n(x)=
\sum_{k=0}^{n+\alpha} (-1)^{n+\alpha-k} x^k \binom{n+\alpha}{k}
\binom{n+\beta+k}{n}.
$$
\end{proposition}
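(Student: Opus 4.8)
The plan is to avoid expanding the explicit series (\ref{E_J1}) head‑on — collecting powers of $x$ that way would leave a nontrivial binomial identity to verify — and instead to work from a Rodrigues‑type formula for the shifted polynomial, which turns the left‑hand side into a single $n$‑fold derivative that expands immediately into the claimed sum. First I would record the shifted analogue of the Rodrigues formula defining $P_n^{(\alpha,\beta)}$. Substituting $x\mapsto 2x-1$ into that definition sends $1-x$ to $2(1-x)$, sends $1+x$ to $2x$, and, by the chain rule, replaces $\frac{d^n}{dx^n}$ by $2^{-n}\frac{d^n}{dx^n}$. Collecting the powers of $2$ (the prefactor contributes $2^{-n-\alpha-\beta}$, the chain rule $2^{-n}$, and the differentiated product $2^{2n+\alpha+\beta}$, which cancel) yields
$$
\sP^{(\alpha,\beta)}_n(x)=\frac{(-1)^n}{n!}\,(1-x)^{-\alpha}x^{-\beta}\,\frac{d^n}{dx^n}\left((1-x)^{n+\alpha}x^{n+\beta}\right).
$$
For $\alpha,\beta\in{\mathbb N}$ this is a genuine identity of polynomials, inherited directly from the Rodrigues definition in the paper.

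The second step is to multiply by $(x-1)^{\alpha}=(-1)^{\alpha}(1-x)^{\alpha}$, which cancels the factor $(1-x)^{-\alpha}$ and leaves
$$
(x-1)^{\alpha}\sP^{(\alpha,\beta)}_n(x)=\frac{(-1)^{n+\alpha}}{n!}\,x^{-\beta}\,\frac{d^n}{dx^n}\left((1-x)^{n+\alpha}x^{n+\beta}\right).
$$
Because $\alpha\in{\mathbb N}$, the factor $(1-x)^{n+\alpha}$ is a true polynomial and expands as $\sum_{k}\binom{n+\alpha}{k}(-1)^k x^k$, so the bracket becomes $\sum_k \binom{n+\alpha}{k}(-1)^k x^{n+\beta+k}$. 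Applying the power rule $\frac{d^n}{dx^n}x^{n+\beta+k}=(\beta+k+1)_n\,x^{\beta+k}$, multiplying by $x^{-\beta}$, and using $(\beta+k+1)_n=n!\binom{n+\beta+k}{n}$ collapses everything to
$$
(x-1)^{\alpha}\sP^{(\alpha,\beta)}_n(x)=\sum_{k=0}^{n+\alpha}(-1)^{n+\alpha+k}\binom{n+\alpha}{k}\binom{n+\beta+k}{n}x^{k},
$$
and since $(-1)^{n+\alpha+k}=(-1)^{n+\alpha-k}$ this is exactly the asserted formula. A check at $\alpha=\beta=0$ recovers (\ref{E_sP}), which is reassuring.

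The one place needing care is that the honest Rodrigues manipulation is valid only for $\beta$ a nonnegative integer, whereas the statement allows $\beta\in{\mathbb C}$. I would close this gap by a polynomiality argument: for fixed $n$ and $\alpha$, both sides are polynomials in $\beta$ with coefficients that are polynomials in $x$ — on the right because $\binom{n+\beta+k}{n}$ is a degree‑$n$ polynomial in $\beta$, and on the left because the coefficients $\binom{n+\alpha+\beta+j}{j}$ in (\ref{E_J1}) are polynomials in $\beta$. Two such expressions agreeing for all $\beta\in{\mathbb N}$ coincide identically, so the identity established for integer $\beta$ extends to all complex $\beta$. The genuine obstacle is thus only the bookkeeping of the powers of $2$ in the first step together with the clean cancellation of $(1-x)^{\pm\alpha}$, both of which rely on $\alpha$ being a nonnegative integer; everything after that is routine.
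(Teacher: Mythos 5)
Your proof is correct, but it takes a genuinely different route from the paper's. The paper works entirely from the explicit series (\ref{E_J1}): it writes $(x-1)^{\alpha}\sP^{(\alpha,\beta)}_n(x)=\sum_j\binom{n+\alpha+\beta+j}{j}\binom{n+\alpha}{n-j}(x-1)^{\alpha+j}$, expands each $(x-1)^{\alpha+j}$ binomially, interchanges the order of summation, converts $(-1)^j\binom{n+\alpha+\beta+j}{j}$ into $\binom{-n-\alpha-\beta-1}{j}$, and collapses the inner sum with the Vandermonde convolution $\sum_j\binom{X}{j}\binom{Y}{n-j}=\binom{X+Y}{n}$; this argument is uniform in $\beta\in{\mathbb C}$ from the start, since (\ref{E_J1}) is the paper's definition for general parameters. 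You instead shift the Rodrigues formula, observe that multiplying by $(x-1)^{\alpha}$ clears the prefactor $(1-x)^{-\alpha}$, and read off the coefficients from an $n$-fold derivative of $(1-x)^{n+\alpha}x^{n+\beta}$ — in effect the differentiation performs the Vandermonde summation for you. Your bookkeeping of the powers of $2$ and the identity $(\beta+k+1)_n=n!\binom{n+\beta+k}{n}$ are both right, and your closing polynomiality-in-$\beta$ argument correctly bridges from $\beta\in{\mathbb N}$ (where the Rodrigues manipulation is an honest polynomial identity) to $\beta\in{\mathbb C}$ (where the left side must be read via (\ref{E_J1})). What the paper's route buys is that no such extension step is needed and no appeal is made to the equivalence of the Rodrigues definition with (\ref{E_J1}); what yours buys is a shorter computation with no summation interchange and no Vandermonde identity. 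Either is acceptable here.
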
 
\begin{proof}
Using (\ref{E_J1}) we may write
\begin{align*}
(x-1)^{\alpha}\sP^{(\alpha,\beta)}_n(x)&=
\sum_{j=0}^n\binom{n+\alpha+\beta+j}{j}\binom{n+\alpha}{n-j} (x-1)^{\alpha+j}\\
&=
\sum_{j=0}^n\binom{n+\alpha+\beta+j}{j}\binom{n+\alpha}{\alpha+j}
\sum_{k=0}^{\alpha+j} 
(-1)^{\alpha+j-k}\binom{\alpha+j}{k}x^k.\\
\end{align*}
Since 
$$\binom{n+\alpha}{\alpha+j}\cdot \binom{\alpha+j}{k}
=
\binom{n+\alpha}{k}\cdot \binom{n+\alpha-k}{\alpha+j-k}
=
\binom{n+\alpha}{k}\cdot \binom{n+\alpha-k}{n-j}, 
$$
changing the order of summation in the previous equation yields
$$
(x-1)^{\alpha}\sP^{(\alpha,\beta)}_n(x)=
\sum_{k=0}^{n+\alpha} x^k \binom{n+\alpha}{k}
\sum_{j=0}^n
(-1)^{\alpha+j-k}\binom{n+\alpha+\beta+j}{j}\binom{n+\alpha-k}{n-j}.
$$
It should be noted that $\binom{n+\alpha-k}{n-j}=0$ if $\alpha<k$ and
$j<k-\alpha$. Since 
$$
(-1)^j \binom{n+\alpha+\beta+j}{j}
=\binom{-n-\alpha-\beta-1}{j},
$$
the last equation may be rewritten as 
$$
(x-1)^{\alpha}\sP^{(\alpha,\beta)}_n(x)=
\sum_{k=0}^{n+\alpha} (-1)^{\alpha-k} x^k \binom{n+\alpha}{k}
\sum_{j=0}^n
\binom{-n-\alpha-\beta-1}{j}\binom{n+\alpha-k}{n-j}.
$$
Using the well-known polynomial identity 
$$
\sum_{j=0}^n \binom{X}{j}\binom{Y}{n-j}=\binom{X+Y}{n}
$$
we obtain
$$
\sum_{j=0}^n
\binom{-n-\alpha-\beta-1}{j}\binom{n+\alpha-k}{n-j}=\binom{-\beta-k-1}{n}.
$$
The statement now follows from the last equation for
$(x-1)^{\alpha}\sP^{(\alpha,\beta)}_n(x)$ and from
$$
\binom{-\beta-k-1}{n}=(-1)^n\binom{\beta+k+n}{n}.
$$

\end{proof}
\begin{corollary}
\label{C_sj}
In the case when $\alpha=0$ we obtain
$$
\sP^{(0,\beta)}_n(x)=
\sum_{k=0}^{n} (-1)^{n-k} x^k \binom{n}{k}
\binom{n+\beta+k}{n}.
$$
\end{corollary}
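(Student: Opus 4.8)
The plan is to obtain this statement as an immediate specialization of Proposition~\ref{P_sj}, setting $\alpha=0$. Since $0\in{\mathbb N}$, the hypotheses of that proposition are satisfied for every $\beta\in{\mathbb C}$, so I am free to substitute $\alpha=0$ directly into its conclusion. The entire content of the corollary is then a matter of simplifying each factor under this substitution, and I expect no genuine obstacle to arise.

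First I would observe that the left-hand side collapses: the prefactor $(x-1)^{\alpha}$ becomes $(x-1)^0=1$, so $(x-1)^{\alpha}\sP^{(\alpha,\beta)}_n(x)$ reduces to $\sP^{(0,\beta)}_n(x)$, which is exactly the quantity we wish to express. Next I would track the summation on the right-hand side. The upper index $n+\alpha$ becomes $n$, the sign exponent $n+\alpha-k$ becomes $n-k$, and the binomial coefficient $\binom{n+\alpha}{k}$ becomes $\binom{n}{k}$; the remaining factors $x^k$ and $\binom{n+\beta+k}{n}$ contain no $\alpha$ and are therefore unchanged. Assembling these reductions yields precisely
$$
\sP^{(0,\beta)}_n(x)=\sum_{k=0}^{n}(-1)^{n-k}x^k\binom{n}{k}\binom{n+\beta+k}{n},
$$
which is the claimed identity.

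The only point that warrants a moment's care is confirming that the specialization is legitimate and that nothing degenerates at $\alpha=0$: the proof of Proposition~\ref{P_sj} passes through intermediate quantities such as $\binom{n+\alpha}{\alpha+j}$ and the expansion of $(x-1)^{\alpha+j}$, but since the final formula in that proposition is already fully simplified and valid for all $\alpha\in{\mathbb N}$, I may invoke it as a black box and need not revisit those steps. As an optional consistency check, one can rederive the same expression directly from~(\ref{E_J1}) by expanding $\left(\tfrac{x-1}{2}\right)^j$ and using the Vandermonde-type identity $\sum_{j}\binom{-n-\beta-1}{j}\binom{n-k}{n-j}=\binom{-\beta-k-1}{n}$ together with $\binom{-\beta-k-1}{n}=(-1)^n\binom{\beta+k+n}{n}$, exactly as in the proof of the proposition but with the $\alpha$-dependent bookkeeping suppressed. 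Either route makes clear that the corollary is a pure special case, so the argument is essentially one line of substitution.
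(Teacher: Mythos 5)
Your proof is correct and matches the paper's (implicit) argument exactly: the corollary is stated as an immediate specialization of Proposition~\ref{P_sj} at $\alpha=0$, where $(x-1)^0=1$ makes the prefactor disappear and the remaining substitutions are routine.
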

Using Corollary~\ref{C_sj} and Proposition~\ref{P_wd} we may 
generalize Lemma~\ref{L_wcd} to the main result of this section.
\begin{theorem}
\label{T_wd}
The weighted Delannoy numbers and the shifted Jacobi polynomials are
linked by the formula
$$
d_{n+\beta,n}^{u,v,w}=u^{\beta}(-w)^n
\sP^{(0,\beta)}_n\left(-\frac{uv}{w}\right).    
$$
Here $\beta\in{\mathbb Z}$ is any integer satisfying $\beta\geq -n$.
\end{theorem}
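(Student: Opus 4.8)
The plan is to mimic the proof of Lemma~\ref{L_wcd}, applying the same two weight substitutions to a path reaching the off-diagonal endpoint $(n+\beta,n)$, and then to match the resulting sum against Corollary~\ref{C_sj} using the trinomial identity introduced to restate (\ref{E_FR}). First I would record the relevant step counts: by Proposition~\ref{P_wd}, a Delannoy path from $(0,0)$ to $(n+\beta,n)$ with $k$ northeast steps has $n+\beta-k$ east steps (weight $u$) and $n-k$ north steps (weight $v$), so the number of north and northeast steps together is always $(n-k)+k=n$, exactly as in the central case. Factoring $(-w)^n$ out of each monomial $u^{n+\beta-k}v^{n-k}w^k$ — that is, replacing the north weight $v$ by $-v/w$ and the northeast weight $w$ by $-1$ — yields, just as in Lemma~\ref{L_wcd},
$$d_{n+\beta,n}^{u,v,w}=(-w)^n\,d_{n+\beta,n}^{u,-v/w,-1}.$$

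Next, because the number of east steps exceeds the number of north steps by exactly $\beta$, I would peel off a factor $u^{\beta}$: writing $u^{n+\beta-k}=u^{\beta}u^{n-k}$ and absorbing $u^{n-k}$ into the north weight $-v/w$ turns it into $-uv/w$, so that $d_{n+\beta,n}^{u,-v/w,-1}=u^{\beta}\,d_{n+\beta,n}^{1,-uv/w,-1}$. It then remains to establish the clean identity $d_{n+\beta,n}^{1,-uv/w,-1}=\sP^{(0,\beta)}_n(-uv/w)$. Expanding the left side by Proposition~\ref{P_wd} and setting $j=n-k$, each term acquires the overall sign $(-1)^n$, which matches the sign carried by Corollary~\ref{C_sj} evaluated at $-uv/w$; what is left is to identify the coefficient $\binom{n+\beta+j}{n-j}\binom{\beta+2j}{j}$ with the coefficient $\binom{n}{j}\binom{n+\beta+j}{n}$ appearing in Corollary~\ref{C_sj}. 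Both equal the trinomial coefficient $\binom{n+\beta+j}{j,\,n-j,\,\beta+j}$, so this is precisely an instance of the multinomial identity preceding Proposition~\ref{P_wd}, and chaining the three displayed equalities gives the theorem.

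The one point requiring care — and the only real obstacle — is the range $-n\le\beta<0$. There the factor $u^{\beta}$ carries a negative exponent, so the intermediate substitutions become identities of Laurent polynomials rather than of honest polynomials. I would check that the terms of the Fray--Roselle sum with $k>n+\beta$ (for which $n+\beta-k<0$) vanish, since the corresponding trinomial coefficient then has a negative entry; the summation range silently contracts to $0\le k\le n+\beta$, and the final product $u^{\beta}(-w)^n\sP^{(0,\beta)}_n(-uv/w)$ is again an ordinary polynomial equal to $d_{n+\beta,n}^{u,v,w}$. Since Corollary~\ref{C_sj} is valid for all $\beta\in{\mathbb C}$ and the trinomial identity holds throughout this range, the argument goes through uniformly for every integer $\beta\ge -n$.
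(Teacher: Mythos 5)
Your proof is correct and follows essentially the same route as the paper's: both rest on Proposition~\ref{P_wd}, the reindexing $k\mapsto n-k$, the multinomial identity $\binom{n+\beta+k}{n-k}\binom{\beta+2k}{k}=\binom{n+\beta+k}{n}\binom{n}{k}$, and a comparison with Corollary~\ref{C_sj}. Your preliminary factoring of $(-w)^n$ and $u^{\beta}$ via weight substitutions (in the style of Lemma~\ref{L_wcd}) is just a repackaging of the algebraic chain the paper carries out directly, and your remark that the terms with $k>n+\beta$ vanish when $\beta<0$ is a correct (and welcome) explicit check of a point the paper leaves implicit.
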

\begin{proof}
By Proposition~\ref{P_wd} we have
$$
d_{n+\beta,n}^{u,v,w}=\sum_{k=0}^{n}
\binom{2n+\beta-k}{k}\binom{2n+\beta-2k}{n-k} 
u^{n+\beta-k}v^{n-k} w^k. 
$$
Replacing $k$ with $(n-k)$ yields
$$
d_{n+\beta,n}^{u,v,w}=\sum_{k=0}^{n}
\binom{n+\beta+k}{n-k}\binom{\beta+2k}{k} 
u^{\beta+k}v^{k} w^{n-k}. 
$$
Here 
$$
\binom{n+\beta+k}{n-k}\binom{\beta+2k}{k} 
=
\binom{n+\beta+k}{n-k,\beta+k,k} 
=
\binom{n+\beta+k}{n}\binom{n}{k},
$$
thus we also have 
$$
d_{n+\beta,n}^{u,v,w}=\sum_{k=0}^{n}
\binom{n+\beta+k}{n}\binom{n}{k} 
u^{\beta+k}v^{k} w^{n-k}. 
$$
On the other hand, by Corollary~\ref{C_sj} we have
$$
(-w)^n\sP^{(0,\beta)}_n\left(-\frac{uv}{w}\right)=
\sum_{k=0}^{n} \binom{n}{k}
\binom{n+\beta+k}{n} u^k v^k w^{n-k}, 
$$
which differs from $d_{n+\beta,n}^{u,v,w}$ only by a factor of $u^{\beta}$.
\end{proof}
Direct substitution of $u=v=w=1$ into Theorem~\ref{T_wd} yields
\begin{equation}
d_{n+\beta,n}=(-1)^n \sP^{(0,\beta)}_n(-1)=(-1)^n P^{(0,\beta)}_n(-3),
\end{equation}
a simpler formula may be obtained by using the ``swapping rule''
(\ref{E_jswap}), which, after replacing the letter $\beta$ with
$\alpha$, yields (\ref{E_dp1}). 
We conclude this section with the generalization of
Proposition~\ref{P_wcdp} to all weighted Delannoy numbers.
\begin{proposition}
\label{P_wdp}
The weighted Delannoy numbers and the shifted 
Jacobi polynomials satisfy
$$
d_{n+\beta,n}^{u,v,w}=u^{\beta}w^n
\sP_n^{(\beta,0)}\left(\frac{uv}{w}+1\right). 
$$
\end{proposition}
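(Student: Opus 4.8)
The plan is to derive this as a direct consequence of Theorem~\ref{T_wd}, exactly paralleling the way Proposition~\ref{P_wcdp} was deduced from Lemma~\ref{L_wcd}. Theorem~\ref{T_wd} already provides $d_{n+\beta,n}^{u,v,w}=u^{\beta}(-w)^n \sP^{(0,\beta)}_n(-uv/w)$, so the only work is to convert the factor $(-w)^n$ together with the reflected, parameter-$(0,\beta)$ shifted Jacobi polynomial into $w^n$ times a shifted Jacobi polynomial of parameters $(\beta,0)$ evaluated at $uv/w+1$.

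First I would establish the shifted analogue of the swapping rule (\ref{E_jswap}), generalizing the Legendre identity (\ref{E_spswap}) that was used in the proof of Proposition~\ref{P_wcdp}. Starting from the definition $\sP^{(\alpha,\beta)}_n(x)=P^{(\alpha,\beta)}_n(2x-1)$, I compute $(-1)^n \sP^{(\alpha,\beta)}_n(-x)=(-1)^n P^{(\alpha,\beta)}_n(-2x-1)$, recognize the argument as $-(2x+1)$, and apply (\ref{E_jswap}) to obtain $P^{(\beta,\alpha)}_n(2x+1)$. Rewriting $2x+1=2(x+1)-1$ then yields
$$
(-1)^n \sP^{(\alpha,\beta)}_n(-x)=\sP^{(\beta,\alpha)}_n(x+1),
$$
which is the clean swapping rule for shifted Jacobi polynomials and specializes to (\ref{E_spswap}) when $\alpha=\beta=0$.

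The final step is to specialize this identity to first parameter $0$ and second parameter $\beta$, and substitute $x=uv/w$, giving $(-1)^n \sP^{(0,\beta)}_n(-uv/w)=\sP^{(\beta,0)}_n(uv/w+1)$. Substituting this into Theorem~\ref{T_wd} and absorbing the sign via $(-w)^n=(-1)^n w^n$ produces $d_{n+\beta,n}^{u,v,w}=u^{\beta}w^n \sP^{(\beta,0)}_n(uv/w+1)$, as claimed. I do not anticipate any genuine obstacle: the whole argument is bookkeeping around the linear substitution $x\mapsto 2x-1$, and the only point requiring care is keeping the order of the parameters straight, since the swapping rule exchanges $\alpha$ and $\beta$ and one must land on $(\beta,0)$ rather than $(0,\beta)$.
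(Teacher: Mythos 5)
Your argument is correct and is exactly the paper's proof: the paper likewise derives the shifted swapping rule $(-1)^n \sP^{(\alpha,\beta)}_n(-x)=\sP^{(\beta,\alpha)}_n(x+1)$ (its equation (\ref{E_sjswap})) in analogy with the proof of Proposition~\ref{P_wcdp} and then applies it directly to Theorem~\ref{T_wd}. No gaps; your extra care about landing on $(\beta,0)$ rather than $(0,\beta)$ is exactly the one detail that matters.
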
 
\begin{proof}
In analogy to the argument seen in the proof of
Proposition~\ref{P_wcdp}, we may rewrite (\ref{E_jswap}) for shifted
Jacobi polynomials as 
\begin{equation}
\label{E_sjswap}
(-1)^n \sP^{(\alpha,\beta)}_n(-x)=\sP^{(\beta,\alpha)}_n(x+1).
\end{equation}
The statement is an immediate consequence of this 
swapping rule and Theorem~\ref{T_wd}.
\end{proof}

\section{Orthogonality of Jacobi polynomials with
  natural number parameters}
\label{s_orth}

As a consequence of Theorem~\ref{T_wd} we obtain the following 
lattice path representations of the shifted Legendre polynomials
$\sP_n(x)=\sP_n^{(0,0)}(x)$ and the shifted Jacobi polynomials
$\sP_n^{(0,\beta)}(x)$. 

\begin{corollary}
\label{C_llp}
For all $\beta\in{\mathbb Z}$ satisfying $\beta\geq -n$, the shifted
Jacobi polynomial 
$\sP_n^{(0,\beta)}(x)$ is $d_{n+\beta,n}^{1,x,-1}$, i.e., the total weight
of all Delannoy paths from $(0,0)$ to $(n+\beta,n)$, where each east
step has weight $1$, each north step has weight $x$, and each northeast
step has weight $-1$. 
\end{corollary}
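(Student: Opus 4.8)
The plan is to obtain this statement as an immediate specialization of Theorem~\ref{T_wd}. That theorem asserts, for all integers $\beta \geq -n$, the identity $d_{n+\beta,n}^{u,v,w}=u^{\beta}(-w)^n \sP^{(0,\beta)}_n(-uv/w)$, valid for commuting variables $u,v,w$. The only work is to substitute the particular weighting $u=1$, $v=x$, $w=-1$ described in the corollary and simplify the resulting prefactors and argument; the permissible range $\beta\geq -n$ is inherited verbatim.

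Carrying out the substitution, I would first observe that the two prefactors collapse: $u^{\beta}=1^{\beta}=1$ and $(-w)^n=(-(-1))^n=1^n=1$. Next, the argument of the shifted Jacobi polynomial simplifies to $-uv/w = -(1)(x)/(-1)=x$. Feeding these three evaluations into the right-hand side of Theorem~\ref{T_wd} gives $d_{n+\beta,n}^{1,x,-1}=\sP_n^{(0,\beta)}(x)$, which is precisely the asserted algebraic identity.

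It remains only to translate this into the combinatorial language of the statement. By Definition~\ref{D_wdn}, the quantity $d_{n+\beta,n}^{1,x,-1}$ is by definition the total weight of all Delannoy paths from $(0,0)$ to $(n+\beta,n)$ in which each east step carries weight $1$, each north step carries weight $x$, and each northeast step carries weight $-1$, so the path interpretation in the corollary follows directly from the identity just established. I do not anticipate any genuine obstacle here: the entire content reduces to the sign bookkeeping in the specialization $w=-1$, which I have verified above, and no hypothesis beyond $\beta\geq -n$ is needed.
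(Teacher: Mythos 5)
Your proposal is correct and is exactly how the paper obtains this statement: Corollary~\ref{C_llp} is presented there as an immediate consequence of Theorem~\ref{T_wd} via the specialization $u=1$, $v=x$, $w=-1$, and your sign bookkeeping ($1^{\beta}=1$, $(-(-1))^n=1$, $-uv/w=x$) matches. Nothing further is needed.
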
 

In this section we use this Corollary to provide a combinatorial proof 
for all $\alpha,\beta\in{\mathbb N}$, of the
fact that the Jacobi polynomials $P_n^{(\alpha,\beta)}(x)$ form an
orthogonal basis with respect to the inner product   
$$\langle f, g\rangle :=\int_{-1}^1 f(x)\cdot g(x)\cdot
(1-x)^{\alpha}(1+x)^{\beta}\ dx.$$
Using the substitution $u:=(x+1)/2$ yields 
$$
\int_{-1}^1 f(x)\cdot g(x)\cdot (1-x)^{\alpha}(1+x)^{\beta}\ dx
=
2^{\alpha+\beta+1}\int_{0}^1 f(2u-1)\cdot g(2u-1)\cdot
(1-u)^{\alpha}u^{\beta}\ du 
$$  
thus the orthogonality relations we want to prove may be restated for
shifted Jacobi (and Legendre) polynomials as follows.
\begin{theorem}
\label{T_orth}
For all $\alpha,\beta\in{\mathbb N}$, the shifted Jacobi polynomials
$\sP_n^{(\alpha,\beta)}(x)$ form an
orthogonal basis with respect to the inner product   
$$\langle f, g\rangle :=\int_{0}^1 f(x)\cdot g(x)\cdot
(1-x)^{\alpha}x^{\beta}\ dx.$$
\end{theorem}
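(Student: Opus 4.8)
The plan is to evaluate the inner product $\langle\sP_m^{(\alpha,\beta)},\sP_n^{(\alpha,\beta)}\rangle$ explicitly and then to recognize the resulting signed sum as the rook-theoretic inner product of two (generalized) Laguerre polynomials, whose orthogonality is the classical fact ${\mathcal L}(l_m(x)l_n(x))=\delta_{m,n}\,n!$ recalled in the preliminaries. Since $\deg\sP_n^{(\alpha,\beta)}=n$ with nonzero leading coefficient for $\alpha,\beta\in{\mathbb N}$, the ``basis'' assertion is immediate once orthogonality (vanishing off the diagonal, non-vanishing on the diagonal) is established, so the whole problem is to compute the integral.

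First I would turn every factor into an explicit polynomial in $x$. Proposition~\ref{P_sj} lets me absorb the weight factor $(1-x)^\alpha$ into one of the two polynomials, writing
$$
(1-x)^\alpha\sP_n^{(\alpha,\beta)}(x)=\sum_{k=0}^{n+\alpha}(-1)^{n-k}\binom{n+\alpha}{k}\binom{n+\beta+k}{n}x^k,
$$
which is the signed north-step generating function attached to the lattice-path reading of Corollary~\ref{C_llp} and Corollary~\ref{C_sj}. Expanding the second polynomial $\sP_m^{(\alpha,\beta)}(x)$ in powers of $x$ as well, the integrand becomes a product of two explicit signed power sums times $x^\beta$, and each monomial integrates to the Beta value $\int_0^1 x^{j+\beta}\,dx=\tfrac1{j+\beta+1}$. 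Thus $\langle\sP_m^{(\alpha,\beta)},\sP_n^{(\alpha,\beta)}\rangle$ becomes a double sum of products of binomial coefficients divided by a linear factor $j+\beta+1$, where $j$ records the combined number of north steps in the two path families.

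The decisive step is then to reorganize this double sum into the signed rook-placement count that computes a Laguerre inner product. The idea is that the factors $\binom{n+\beta+k}{n}$ coming from the polynomials combine with the Beta denominators $j+\beta+1$ to collapse into clean factorials, so that the parameter $\beta$ plays the role of the board offset and the parameter $\alpha$ plays the role of the generalized-Laguerre parameter. Concretely I would exhibit a weight-preserving correspondence between the pairs of Delannoy configurations counted here and the compatible rook placements on the relevant board, reducing $\langle\sP_m^{(\alpha,\beta)},\sP_n^{(\alpha,\beta)}\rangle$ to a nonzero multiple of ${\mathcal L}(l_m^{}l_n^{})$ (in its generalized form), whence the known rook computation forces it to vanish for $m\neq n$ and to be nonzero for $m=n$.

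The hard part will be exactly this reinterpretation: converting the Beta-function moments $1/(j+\beta+1)$, which carry factorials in the \emph{denominator}, into the factorial moments ${\mathcal L}(x^j)=j!$ of the rook functional, and doing so while both parameters $\alpha$ and $\beta$ are active simultaneously. Equivalently, the obstacle reduces to proving a single binomial summation identity that equates the Jacobi moment sum with the Laguerre rook sum; I expect this to follow from a Vandermonde-type collapse of the products $\binom{n+\beta+k}{n}\big/(j+\beta+1)$, but pinning down the precise parameter dictionary ($\alpha\leftrightarrow$ generalized Laguerre parameter, $\beta\leftrightarrow$ board size) and verifying the sign bookkeeping inherited from the northeast-step weight $-1$ is where the real work lies.
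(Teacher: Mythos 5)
You correctly observe that the basis claim follows once orthogonality is established, and your instinct that the computation should land on the rook-theoretic Laguerre sum is exactly the connection the paper exploits. But as written the proposal has a genuine gap, and moreover the specific target you set yourself is not attainable. First, the entire content of the proof is deferred to an unexhibited ``weight-preserving correspondence'' and a ``Vandermonde-type collapse''; nothing is actually proved. Second, the identity you aim for --- that $\langle \sP_m^{(\alpha,\beta)},\sP_n^{(\alpha,\beta)}\rangle$ equals a nonzero multiple of ${\mathcal L}(l_m l_n)$ --- cannot hold in that form. The correct statement, equation (\ref{E_PL2}) of the paper, relates the moment $\int_0^1 x^{m+\beta}\sP_n^{(0,\beta)}(x)\,dx$ to ${\int_0^\infty x^m l_n^{(\beta)}(x)x^{\beta}e^{-x}dx}$ with the proportionality factor $(n+m+\beta+1)!$, which depends on the degree $m$ of the monomial. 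Consequently, when you expand \emph{both} polynomials in powers of $x$, each term of the double sum acquires a different factorial factor and the two Gram entries are no longer proportional; only the pairing of $\sP_n$ against a single monomial transfers cleanly. The fix is also the simplification you should have made at the outset: since the weight $(1-x)^\alpha x^\beta$ is positive on $(0,1)$, the diagonal entries are automatically nonzero, and it suffices to prove $\langle x^m,\sP_n^{(\alpha,\beta)}\rangle=0$ for $m<n$, which collapses your double sum to a single sum. Your parameter dictionary is also off: in (\ref{E_PL2}) the Jacobi parameter $\beta$ is simultaneously the board offset and the generalized Laguerre parameter, while $\alpha$ never reaches the Laguerre side at all.

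This last point exposes the second missing idea: handling $\alpha$ and $\beta$ ``simultaneously'' is precisely what the paper does \emph{not} manage to do by a single identity. The paper first proves the case $\alpha=0$ (Proposition~\ref{P_orth}) by a genuinely combinatorial argument --- encoding $(n+m+\beta+1)!\int_0^1 x^{m+\beta}\sP_n^{(0,\beta)}(x)\,dx$ as a signed count of pairs $(L,\sigma)$ of a Delannoy path and a bijection, cancelling with commuting sign-reversing involutions $\tau_i$, and recognizing the surviving fixed points as the rook sum $\sum_k(-1)^k\binom{n+\beta}{k}\binom{n}{k}k!(n+m+\beta-k)!$ --- and only then reduces $\alpha>0$ to $\alpha=0$ via the path decomposition of Proposition~\ref{P_abdec}, which writes $(x-1)^{\alpha}\sP_n^{(\alpha,\beta)}(x)$ as a signed binomial combination of the polynomials $x^{\alpha-i}\sP_n^{(0,\alpha+\beta-i)}(x)$. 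Some such two-stage reduction (or an explicit evaluation of the single sum $\sum_k(-1)^{n-k}\binom{n+\alpha}{k}\binom{n+\beta+k}{n}/(m+\beta+k+1)$ for $m<n$) is unavoidable; without it your outline does not constitute a proof.
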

First we prove Theorem~\ref{T_orth} for the case $\alpha=0$ only. Since 
an inner product of any polynomial with itself is nonzero, it is
sufficient to show the following orthogonality relation.
\begin{proposition}
\label{P_orth}
For all $m,n,\beta\in {\mathbb N}$ satisfying $m<n$ we have
$$\int_{0}^1 x^{m+\beta}\cdot \sP^{(0,\beta)}_n(x)\ dx=0.$$
\end{proposition}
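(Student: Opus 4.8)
The plan is to prove the vanishing of the integral $\int_0^1 x^{m+\beta}\sP_n^{(0,\beta)}(x)\,dx$ for $m<n$ by substituting the explicit expansion of $\sP_n^{(0,\beta)}(x)$ from Corollary~\ref{C_sj} and reducing the claim to a binomial identity. Indeed, integrating $x^{m+\beta}\cdot x^k$ over $[0,1]$ gives $1/(m+\beta+k+1)$, so the integral becomes a finite alternating sum, and the task is to show that this sum vanishes whenever $m<n$.

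Let me write this out. Using Corollary~\ref{C_sj},
$$
\int_0^1 x^{m+\beta}\sP_n^{(0,\beta)}(x)\,dx
=\sum_{k=0}^n(-1)^{n-k}\binom{n}{k}\binom{n+\beta+k}{n}\int_0^1 x^{m+\beta+k}\,dx
=\sum_{k=0}^n\frac{(-1)^{n-k}\binom{n}{k}\binom{n+\beta+k}{n}}{m+\beta+k+1}.
$$
So the whole proposition rests on the identity
$$
\sum_{k=0}^n\frac{(-1)^{n-k}\binom{n}{k}\binom{n+\beta+k}{n}}{m+\beta+k+1}=0
\qquad(m<n).
$$
The natural route is to interpret the reciprocal $1/(m+\beta+k+1)$ itself as a Beta-type integral $\int_0^1 t^{m+\beta+k}\,dt$ and swap the order of summation and integration, which turns the inner $k$-sum into $\sum_{k=0}^n(-1)^{n-k}\binom{n}{k}\binom{n+\beta+k}{n}t^{k}$, a polynomial in $t$. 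The most promising way to finish is to recognize this $k$-sum, up to the factor $t^{\text{something}}$, as the application of a finite-difference (or forward-shift) operator of order $n$ to a polynomial of degree at most $n$, so that one gets a factor $(1-t)^n$ or similar; integrating $t^{m+\beta}$ against that factor and using $m<n$ should force the result to zero. Alternatively, one can simply recall that $\sP_n^{(0,\beta)}$ is orthogonal to all lower-degree polynomials with respect to the weight $x^\beta$ and observe that $x^{m+\beta}=x^\beta\cdot x^m$ with $x^m$ of degree $m<n$ — but since orthogonality is precisely what Theorem~\ref{T_orth} is trying to establish combinatorially, I would instead keep the argument self-contained and prove the binomial identity directly.

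The cleanest self-contained finish is to factor out $x^\beta$ and write the integrand as $x^\beta\cdot\bigl(x^m\sP_n^{(0,\beta)}(x)\bigr)$, then apply repeated integration by parts using the Rodrigues-type representation underlying the Jacobi definition given at the start of the paper. Concretely, one expects $x^{\beta}\sP_n^{(0,\beta)}(x)$ to be, up to a constant, the $n$-th derivative of $x^{n+\beta}(x-1)^n$ (the shifted analogue of the Rodrigues formula), so that
$$
\int_0^1 x^m\cdot\frac{d^n}{dx^n}\Bigl(x^{n+\beta}(x-1)^n\Bigr)\,dx,
$$
after integrating by parts $n$ times, moves all derivatives onto $x^m$; since $m<n$, the $n$-th derivative of $x^m$ vanishes, and the boundary terms vanish because $x^{n+\beta}(x-1)^n$ has zeros of order $n$ at both endpoints. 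This handles the proposition in one stroke.

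\textbf{The main obstacle} I anticipate is establishing the shifted Rodrigues representation rigorously from the defining formula for $P_n^{(\alpha,\beta)}$ (the substitution $x\mapsto 2x-1$ changes the normalizing constants and the interval), together with checking that the boundary terms genuinely vanish for the full range $\beta\in\mathbb{N}$ rather than $\beta>-1$. If the Rodrigues route proves awkward to set up cleanly from the stated formulas, I would fall back on the direct binomial identity above, proving $\sum_{k=0}^n(-1)^{n-k}\binom{n}{k}\binom{n+\beta+k}{n}/(m+\beta+k+1)=0$ by the finite-difference argument; the difficulty there is purely bookkeeping — correctly identifying the polynomial hit by the $n$-th forward difference and confirming its degree is at most $n$ so that the difference annihilates all but the top, which the condition $m<n$ then kills.
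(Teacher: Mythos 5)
Your main route is correct but genuinely different from the paper's. From the paper's defining (Rodrigues-type) formula for $P_n^{(\alpha,\beta)}$ one does get, after the substitution $x\mapsto 2x-1$, the clean identity $x^{\beta}\,\sP_n^{(0,\beta)}(x)=\frac{(-1)^n}{n!}\frac{d^n}{dx^n}\bigl(x^{n+\beta}(1-x)^n\bigr)$; the boundary terms in the $n$-fold integration by parts vanish because $x^{n+\beta}(1-x)^n$ has zeros of order at least $n$ at both endpoints (this is where $\beta\geq 0$ enters), and $\frac{d^n}{dx^n}x^m=0$ for $m<n$ finishes the argument. This is the classical analytic proof of Jacobi orthogonality. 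The paper deliberately avoids it: it interprets the reciprocal $1/(k+m+\beta+1)$ as counting bijections $\sigma$ paired with a Delannoy path $L$, cancels most valid pairs $(L,\sigma)$ with a family of commuting sign-reversing involutions (exchanging a northeast step in column $i$ with an east--north pair when $\sigma(r)<\sigma(a_i)$), and reduces the surviving fixed points to the identity $\sum_{k=0}^n(-1)^k\binom{n+\beta}{k}\binom{n}{k}\,k!\,(n+m+\beta-k)!=0$, i.e.\ to the orthogonality of generalized Laguerre polynomials from rook theory. Your approach buys brevity and no combinatorial bookkeeping; the paper's buys exactly the combinatorial content (the bridge to rook polynomials developed in Section~\ref{s_rook}) that is the point of the article. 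One caution about your fallback route: writing $1/(m+\beta+k+1)=\int_0^1 t^{m+\beta+k}\,dt$ and swapping merely reproduces the original integral, and the finite-difference heuristic does not produce a factor $(1-t)^n$, because $\binom{n+\beta+k}{n}$ has degree exactly $n$ in $k$. Expanding it as $\sum_j\binom{n+\beta}{n-j}\binom{k}{j}$ gives $\sum_j\binom{n+\beta}{n-j}\binom{n}{j}t^j(t-1)^{n-j}$, whose $j=n$ term carries no factor of $(t-1)$ at all; integrating against $t^{m+\beta}$ then lands you precisely on the Laguerre-type sum above, which still needs a separate argument (the paper supplies one by treating it as a polynomial in $m$). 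So the fallback as stated has a gap, but your primary Rodrigues argument does not need it.
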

\begin{proof}
Using the lattice path representation stated in Corollary~\ref{C_llp},
each Delannoy path from $(0,0)$ to $(n+\beta,n)$ containing $k$ 
north steps contributes a term $(-1)^{n-k} x^k$ to
$\sP^{(0,\beta)}_n(x)$, and a term $(-1)^{n-k} x^{k+m+\beta}$ to
$x^{m+\beta}\cdot\sP^{(0,\beta)}_n(x)$. Since
$$
\int_{0}^1 x^{k+m+\beta}\ dx=\frac{1}{k+m+\beta+1},
$$
we obtain that each Delannoy path from $(0,0)$ to $(n+\beta,n)$
containing $k$ north steps contributes a term
$$
\frac{(-1)^{n-k} (n+m+\beta+1)!}{k+m+\beta+1}
\quad\mbox{to}\quad 
(n+m+\beta+1)!\cdot \int_{0}^1 x^{m+\beta}\cdot \sP^{(0,\beta)}_n(x)\ dx.
$$
Therefore 
$$
(n+m+\beta+1)!\cdot \int_{0}^1 x^{m+\beta}\cdot \sP^{(0,\beta)}_n(x)\ dx
$$
is the total weight of all pairs $(L,\sigma)$ where $L$ is a Delannoy
path from $(0,0)$ to $(n+\beta,n)$ and $\sigma$ is a bijection 
$\{r,a_1,\ldots,a_{n+\beta},b_1,\ldots,b_m\}\rightarrow
\{1,\ldots,m+n+\beta+1\}$, subject to the following rules:
\begin{itemize}
\item[(i)] $\sigma(r)<\sigma(a_i)$ holds for all $i$ such that 
there is an east step in $L$ from $(i-1,y)$ to $(i,y)$ for some $y$;
\item[(ii)] $\sigma(r)<\sigma(b_j)$ holds for $j=1,2,\ldots,m$.
\end{itemize}
The weight of the pair $(L,\sigma)$ is defined to be a function of the
Delannoy path $L$: each northeast step contributes a factor of $(-1)$, all
other steps contribute a factor of $1$.

Indeed, a Delannoy path $L$ from $(0,0)$ to $(n+\beta,n)$ with
$k$ north steps has $n-k$ northeast steps and
$n+\beta-(n-k)=\beta+k$ east steps. After fixing $L$, condition (i)
above requires $\sigma(r)$ to be less than $\beta+k$ elements in
$\{\sigma(a_1),\ldots,\sigma(a_{n+\beta})\}$. Conditions (i) and (ii)
together force exactly $\beta+k+m$ elements of
$\{1,\ldots,m+n+\beta+1\}$ to be more than $\sigma(r)$. Thus the number
of bijections $\sigma$ forming a valid pair $(L,\sigma)$ with this fixed $L$ is
$(n+m+\beta+1)!/(k+m+\beta+1)$ and the total contribution of all pairs
$(L,\sigma)$ for this fixed $L$ is $(-1)^{n-k} (n+m+\beta+1)!/(k+m+\beta+1)$.
Let us call the pairs $(L,\sigma)$ satisfying (i) and (ii) {\em valid
  pairs}. 

Let us now eliminate the contribution  of some valid pairs $(L,\sigma)$ by
introducing the following involutions $\tau_i$ for $i=1,2,\ldots,n+\beta$. 
If the Delannoy path $L$ contains a northeast step from some $(i-1,y)$
to $(i,y+1)$ and $\sigma(r)<\sigma(a_i)$ then 
we define $\tau_i(L,\sigma)$ as $(L',\sigma)$ where $L'$ is obtained
from $L$ by replacing the northeast step from $(i-1,y)$ to $(i,y+1)$  
with an east step from $(i-1,y)$ to $(i,y)$ followed by a north step 
from $(i,y)$ to $(i,y+1)$. (Note that the condition
$\sigma(r)<\sigma(a_i)$ guarantees that $(L',\sigma)$ is a valid pair.) 
If $L$ contains an east step from $(i-1,y)$ to $(i,y)$ followed by a
north step from $(i,y)$ to $(i,y+1)$, we define
$\tau_i(L,\sigma)$ as $(L',\sigma)$ where $L'$ is obtained
from $L$ by replacing the sequence of one east step and one north step from 
$(i,y)$ to $(i,y+1)$ by a diagonal step from $(i,y)$ to $(i,y+1)$. 
(Note that condition (i) for $(L,\sigma)$ requires
$\sigma(r)<\sigma(a_i)$ in this case.)  In all
other situations we define $\tau_i(L,\sigma):=(L,\sigma)$. 
Obviously, the $\tau_i$'s are involutions on the set of valid pairs, and they
commute pairwise, since they change disjoint parts of the underlying
Delannoy paths. Thus they induce a ${\mathbb Z}_2^{n+\beta}$-action on the
set of  valid pairs. If $\tau_i(L,\sigma)\neq (L,\sigma)$ then the weight
of $\tau_i(L,\sigma)$ is the negative of the weight of $(L,\sigma)$. Thus
the weight of all $(L,\sigma)$ pairs that belong to the same ${\mathbb
  Z}_2^{n+\beta}$-orbit cancels, unless the orbit in question contains a
single fixed point. 

We have shown that 
$$
(n+m+\beta+1)!\cdot \int_{0}^1 x^{m+\beta}\cdot \sP^{(0,\beta)}_n(x)\ dx
$$
equals the total weight of all valid pairs $(L,\sigma)$ satisfying 
$\tau_i(L,\sigma)= (L,\sigma)$ for $i=1,2,\ldots,n+\beta$. 
These pairs may be characterized by the following two conditions:
\begin{itemize}
\item[(iii)] $\sigma(r)>\sigma(a_i)$ holds for all $i$ such that 
there is a northeast east step in $L$ from $(i-1,y)$ to $(i,y+1)$ for
some $y$;
\item[(iv)] there is no east step immediately followed by a north step. 
\end{itemize}
Condition (iv) is a statement about the Delannoy path $L$ only. It
may be stated equivalently by requiring that the only way to go before
the first, after the last, or  between two consecutive northeast steps is 
to use all the north steps before all the east steps. Such a Delannoy
path is uniquely determined by the set of its diagonal steps. A sequence of 
diagonal steps can be completed to a Delannoy path satisfying (iv) if
and only if the first coordinates and the second coordinates of the
starting points both form a strictly increasing sequence. Thus there are 
$\binom{n+\beta}{k}\binom{n}{k}$ Delannoy paths satisfying (iv). 
Given a Delannoy path $L$ satisfying (iv) having $k$ diagonal steps,
conditions (i), (ii), and (iii) are equivalent to stating that
$\sigma(r)=k+1$, $\sigma(a_i)\leq k$ for all $i$ such that 
there is a northeast east step in $L$ from $(i-1,y)$ to $(i,y+1)$ for
some $y+1$, and the image of all remaining elements under $\sigma$
belongs to $\{k+1,k+2,\ldots,n+m+\beta+1\}$. There are
$k!(n+m+\beta-k)!$ ways to find such a $\sigma$. 

Therefore we obtain the following equality:
\begin{equation}
\label{E_PL}
(n+m+\beta+1)!\cdot \int_{0}^1 x^{m+\beta}\cdot \sP^{(0,\beta)}_n(x)\ dx
=
\sum_{k=0}^n (-1)^k \binom{n+\beta}{k}\binom{n}{k}\cdot k!(n+m+\beta-k)!
\end{equation}   
We are left to show that the right hand side is zero for $m<n$. This is
known in the theory of rook polynomials (see Section~\ref{s_rook}), so
here we indicate a short proof for completeness sake only.
The right hand side is
$(m+\beta)!$ times 
$$
p(m):=\sum_{k=0}^n (-1)^k \binom{n}{k}(n+\beta-k+1)_k (m+\beta)_{n-k}
=(-1)^n \sum_{k=0}^n  \binom{n}{k}(n+\beta-k+1)_k (-m-\beta-n+k)_{n-k}.$$
Consider $p(m)$ as a polynomial function of $m$. The number $(-1)^n p(-m)$
is then the number of ways to select a $k$-element subset of an
$n$-element set and injectively color its elements using $n+\beta$
colors, then color the remaining $n-k$ elements injectively, using a
disjoint set of $m-\beta-1$ colors. Thus we have
$$
(-1)^n p(-m)=\binom{n+m-1}{n}\quad\mbox{implying}\quad 
p(m)=(-1)^n\binom{n-m-1}{n}.  
$$
This is zero for $m<n$. 
\end{proof}
The rest of the proof of Theorem~\ref{T_orth} is a consequence of the
following Proposition. 
\begin{proposition}
\label{P_abdec}
For all $\alpha,\beta\in {\mathbb N}$, the shifted Jacobi polynomials
satisfy
$$
(x-1)^{\alpha}\sP^{(\alpha,\beta)}_n(x)
=
\sum_{i=0}^{\alpha}\binom{\alpha}{i} 
(-1)^i x^{\alpha-i} \sP_{n}^{(0,\alpha+\beta-i)}(x). 
$$
\end{proposition}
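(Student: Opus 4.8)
The plan is to prove the identity by comparing the coefficients of each power $x^k$ on the two sides, using the explicit expansions already at our disposal. Both sides are polynomials in $x$ of degree $n+\alpha$, so it suffices to check that they agree coefficientwise. For the left-hand side I would invoke Proposition~\ref{P_sj} directly, which gives
$$
(x-1)^{\alpha}\sP^{(\alpha,\beta)}_n(x)=
\sum_{k=0}^{n+\alpha} (-1)^{n+\alpha-k} \binom{n+\alpha}{k}
\binom{n+\beta+k}{n}\, x^k,
$$
so the target coefficient of $x^k$ is $(-1)^{n+\alpha-k}\binom{n+\alpha}{k}\binom{n+\beta+k}{n}$.

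First I would expand each summand on the right using Corollary~\ref{C_sj} with second parameter $\alpha+\beta-i$, namely
$$
\sP^{(0,\alpha+\beta-i)}_n(x)=\sum_{j=0}^{n}(-1)^{n-j}\binom{n}{j}\binom{n+\alpha+\beta-i+j}{n}\,x^{j}.
$$
Substituting this in and collecting the power $x^{k}$ via the relation $k=\alpha-i+j$ (so that $j=k-\alpha+i$), I expect two simplifications to fall out: the overall sign becomes $(-1)^{i+n-j}=(-1)^{n+\alpha-k}$, independent of $i$, and --- the key point --- the argument of the last binomial collapses to $\binom{n+\beta+k}{n}$, again independent of $i$ because the $-i$ and the $+i$ hidden inside $j$ cancel. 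The coefficient of $x^{k}$ on the right therefore factors as
$$
(-1)^{n+\alpha-k}\binom{n+\beta+k}{n}\sum_{i}\binom{\alpha}{i}\binom{n}{k-\alpha+i}.
$$

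It then remains to evaluate the inner sum. Reindexing by $i\mapsto \alpha-i$ turns it into $\sum_{i}\binom{\alpha}{i}\binom{n}{k-i}=\binom{n+\alpha}{k}$, which is exactly the Vandermonde convolution already used in the proof of Proposition~\ref{P_sj}. This reproduces the target coefficient and completes the argument. The computation is essentially bookkeeping, so I do not anticipate a genuine obstacle; the only points demanding care are verifying that the binomial $\binom{n+\beta+k}{n}$ really is independent of the summation index $i$ after the substitution, and that the summation ranges cause no boundary trouble --- the out-of-range binomials vanish automatically, so one may let $i$ range over all integers and leave the limits to Vandermonde.
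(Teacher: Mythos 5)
Your argument is correct, but it takes a genuinely different route from the paper. You verify the identity algebraically: expand the left side via Proposition~\ref{P_sj}, expand each $\sP_n^{(0,\alpha+\beta-i)}(x)$ on the right via Corollary~\ref{C_sj}, observe that after setting $j=k-\alpha+i$ both the sign $(-1)^{n+\alpha-k}$ and the binomial $\binom{n+\beta+k}{n}$ become independent of $i$, and finish with the Vandermonde convolution $\sum_i\binom{\alpha}{i}\binom{n}{k-i}=\binom{n+\alpha}{k}$. I checked the bookkeeping and it goes through exactly as you predict, including the boundary remark that out-of-range binomials vanish. The paper instead argues combinatorially: it first shows (again using Proposition~\ref{P_sj}) that $(x-1)^{\alpha}\sP^{(\alpha,\beta)}_n(x)$ is the total weight of all Delannoy paths from $(0,0)$ to $(n+\alpha+\beta,n+\alpha)$ whose first $\alpha$ steps contain no east step, and then decomposes each such path into its first $\alpha$ steps (contributing $\binom{\alpha}{i}(-1)^i x^{\alpha-i}$ when $i$ of them are northeast steps) followed by an unrestricted Delannoy path accounting for $\sP_{n}^{(0,\alpha+\beta-i)}(x)$ via Corollary~\ref{C_llp}. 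Your approach is shorter and purely mechanical; what it gives up is the weighted lattice-path interpretation of $(x-1)^{\alpha}\sP^{(\alpha,\beta)}_n(x)$ itself, which is in keeping with the combinatorial theme of the paper and is the object the author singles out again in the concluding remarks as having a combinatorial meaning for all $\alpha\in\mathbb{N}$.
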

\begin{proof}
First we show using Proposition~\ref{P_sj} that
$(x-1)^{\alpha}\sP^{(\alpha,\beta)}_n(x)$ is the total weight of all
Delannoy paths from $(0,0)$ to $(n+\alpha+\beta,n+\alpha)$ subject to
the restriction that none of the first $\alpha$ steps is an east
step. Here each east step contributes a factor of $1$, each north step
contributes a factor of $x$ and each northeast step contributes a factor
of $-1$. Indeed, if such a Delannoy path has $k$ north steps then it
has $n+\alpha-k$ northeast steps and $\beta+k$ east steps. There are 
$\binom{n+\alpha}{k}$ ways to determine the order of the north and the
northeast steps among themselves and then there are $\binom{n+\beta}{n}$
ways to determine the order of the $\beta+k$ east steps with respect to the
$n$ other steps (we subtracted $\alpha$ from $n+\alpha$ for the first
$\alpha$ steps which can not be east steps).

Every Delannoy path subject to the above restriction may be uniquely
decomposed as follows. The first $\alpha$ steps form a Delannoy path 
with no east step from $(0,0)$ to some $(i,\alpha)$ where $0\leq i\leq
\alpha$ is the number of northeast steps among the first $\alpha$
steps. These contribute a factor of $(-1)^i x^{\alpha-i}$. Given $i$, there are
$\binom{\alpha}{i}$ ways to determine of the order of the first $\alpha$
steps. The rest of the path is an unrestricted Delannoy path from
$(i,\alpha)$ to $(n+\alpha+\beta,n+\alpha)$, shifting its start 
to the origin yields a Delannoy path from $(0,0)$ to
$(n+\alpha+\beta-i,n)$. This part of the path may be chosen
independently among all Delannoy paths from $(0,0)$ to
$(n+\alpha+\beta-i,n)$ and, by Corollary~\ref{C_llp}, their total
contribution is $\sP_{n}^{(0,\alpha+\beta-i)}(x)$.
\end{proof}

Using Proposition~\ref{P_abdec} we may conclude the proof of
Theorem~\ref{T_orth} as follows. Assume $\alpha,\beta\in {\mathbb N}$
and $m<n$. Then
\begin{align*}
\int_{0}^1 x^m\cdot \sP^{(\alpha,\beta)}_n(x)\cdot
(1-x)^{\alpha}x^{\beta}\ dx
&=
(-1)^{\alpha}
\int_{0}^1 x^{m+\beta}\cdot\sum_{i=0}^{\alpha}\binom{\alpha}{i} (-1)^i
x^{\alpha-i} \sP_{n}^{(0,\alpha+\beta-i)}(x) \ dx\\
&= \sum_{i=0}^{\alpha}\binom{\alpha}{i} (-1)^{\alpha+i} \int_{0}^1
x^{m+(\alpha+\beta-i)}\cdot \sP_{n}^{(0,\alpha+\beta-i)}(x) \ dx.
\end{align*}
All integrals in the last sum are zero by Proposition~\ref{P_orth}.  

\section{Connections to the classical theory of rook polynomials}
\label{s_rook}

For $\beta=0$ equation~(\ref{E_PL}) takes the form
\begin{equation}
\label{E_PL0}
(n+m+1)!\cdot \int_{0}^1 x^{m}\cdot \sP_n(x)\ dx
=
\sum_{k=0}^n (-1)^k \binom{n}{k}^2\cdot k!(n+m-k)!
\end{equation}   
The right hand side here is ${\mathcal L}(x^m l_n(x))$ where $l_n(x)$ is the
Laguerre polynomial defined in (\ref{E_Ldef}). The orthogonality of
these Laguerre polynomials with respect to the inner product induced by
${\mathcal L}$ is equivalent to stating
$$
{\mathcal L}(x^m l_n(x))=\int_0^{\infty} x^m l_n(x) e^{-x}\ dx=0 \quad\mbox{for $m<n$}.
$$
Thus the combinatorial orthogonality of the shifted Legendre polynomials 
reduces to the combinatorial orthogonality of the Laguerre polynomials
used in the theory of rook polynomials. At a hasty look, the connection
is ``almost obvious'' visually, since in both combinatorial situations the term
$\binom{n}{k}^2$ expresses the number of ways to select $k$ rows and $k$
columns on some $n\times n$ ``chess-board'', as the rows and columns where
diagonal steps, respectively, rooks are located. The main difference is
in the next step of the selection process: in the case of the valid pairs
$(L,\sigma)$ that are fixed points of the involutions $\tau_i$ the
diagonal steps must follow a strictly increasing order in the picture,
and a factor of $k!$ arises by selecting the restriction of an injective
map $\sigma$ to the columns associated to the diagonal steps. In rook
polynomial theory, however, the selected rooks may have any permutation
pattern, thus a factor of $k!$ arises in a completely different way.
It should be also noted that the condition $m<n$ was not used in the
proof of (\ref{E_PL}), thus we may state in general
\begin{equation}
(n+m+1)!\cdot \int_{0}^1 x^{m}\cdot \sP_n(x)\ dx=\int_0^{\infty} x^m
  l_n(x) e^{-x}\ dx \quad\mbox{for all $m,n\in{\mathbb N}$.} 
\end{equation}
For general $\beta\in{\mathbb N}$, equation~(\ref{E_PL}) may be
rewritten as 
\begin{equation}
\label{E_PL2}
(n+m+\beta+1)!\cdot \int_{0}^1 x^{m+\beta}\cdot \sP^{(0,\beta)}_n(x)\ dx
=
\int_0^{\infty} x^m l_n^{(\beta)}(x) x^{\beta} e^{-x}\ dx \quad\mbox{for
    all $m,n\in{\mathbb N}$.} 
\end{equation}   
Here 
$$
l_n^{(\beta)}(x):=\sum_{k=0}^n (-1)^k \binom{n+\beta}{k}\binom{n}{k}
k! x^{n-k} 
$$
is the $n$-th {\em generalized Laguerre polynomial} associated to the
rectangular board $[n+\beta]\times [n]$ (as a subset of itself). These
generalized Laguerre polynomials are related to the usually normalized
generalized Laguerre polynomials $L^{(\beta)}_n(x)$ by the formula
$$
l_n^{(\beta)}(x)=(-1)^n n! L^{(\beta)}_n(x), 
$$
and form an orthogonal basis with respect to the weight function
$x^{\beta}e^{-x}$. 
\begin{remark}
The reader familiar with the work of Foata and
Zeilberger~\cite{Foata-Zeilberger} should also note that in their work too,
showing the orthogonality of the Jacobi polynomials is in a way reduced to the
study of a combinatorial model for the Laguerre
polynomials (presented in~\cite{Foata-Zeilberger-L}). 
The question naturally arises
whether a closer relation could be found between the orthogonality proof
of Foata and Zeilberger~\cite{Foata-Zeilberger} and the weighted lattice path
enumeration proof presented in this paper. Finding such a connection 
depends on surmounting the following difficulty: in the work of Foata and
Zeilberger the parameters $\alpha$ and $\beta$ are weights (associated
to bipermutations) and left as variables, whereas in the present model
$\alpha$ and $\beta$ need to be a natural numbers, since the coordinates
of the endpoints of the lattice paths considered are expressed in terms
of these parameters. 
\end{remark}

\section{Shifted Jacobi polynomials with zero first parameter and negative
  integer second parameter} 
\label{s_bnneg}

Using weighted Delannoy numbers it is easy to connect most shifted Jacobi
polynomials $\sP_n^{(0,\beta)}(x)$ with negative integer $\beta$ to the
Jacobi polynomials with positive integer $\beta$. Indeed, we have the
following formula.
\begin{proposition}
\label{P_bneg}
For $\beta\in{\mathbb N}$ and $n\geq \beta$ we have 
$$
\sP_n^{(0,-\beta)}(x)=x^{\beta} \sP_{n-\beta}(x).
$$
\end{proposition}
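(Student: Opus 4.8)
The plan is to prove the identity through the weighted lattice path model of Corollary~\ref{C_llp}, which expresses both sides as total weights of Delannoy paths and reduces the claim to a single reflection. Because $n\geq\beta$ we have $-\beta\geq -n$, so Corollary~\ref{C_llp} applies with second parameter $-\beta$ and gives $\sP_n^{(0,-\beta)}(x)=d_{n-\beta,n}^{1,x,-1}$, the total weight of all Delannoy paths from $(0,0)$ to $(n-\beta,n)$ with each east step weighted $1$, each north step weighted $x$, and each northeast step weighted $-1$. Applying the same corollary with $n$ replaced by $n-\beta$ and second parameter $\beta$ gives $\sP_{n-\beta}^{(0,\beta)}(x)=d_{(n-\beta)+\beta,\,n-\beta}^{1,x,-1}=d_{n,n-\beta}^{1,x,-1}$, the total weight of all Delannoy paths from $(0,0)$ to $(n,n-\beta)$ with the same step weights. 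This is the shifted Jacobi polynomial $P_{n-\beta}^{(0,\beta)}(2x-1)$ foreseen in the introduction, so it suffices to show $d_{n-\beta,n}^{1,x,-1}=x^{\beta}\,d_{n,n-\beta}^{1,x,-1}$.

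The key step is reflection across the diagonal $y=x$. This map swaps the two coordinates, hence sends each east step to a north step and vice versa while fixing every northeast step, and it carries monotone lattice paths to monotone lattice paths. It is therefore an involutive bijection between Delannoy paths ending at $(n-\beta,n)$ and Delannoy paths ending at $(n,n-\beta)$, preserving the number of northeast steps. I would track the weight under this bijection: a path to $(n-\beta,n)$ with $k$ northeast steps has $n-k$ north steps, so its weight is $(-1)^k x^{n-k}$, whereas its reflection, a path to $(n,n-\beta)$ with the same $k$ northeast steps, has $(n-\beta)-k$ north steps and weight $(-1)^k x^{(n-\beta)-k}$. The two weights differ exactly by the factor $x^{\beta}$, because reflection lowers the number of (weight-$x$) north steps by $\beta$, this drop being forced by the second coordinate of the endpoint decreasing from $n$ to $n-\beta$.

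Summing this identity of weights over the bijection yields $d_{n-\beta,n}^{1,x,-1}=x^{\beta}\,d_{n,n-\beta}^{1,x,-1}$, which is the desired equality. I expect the only point requiring care to be the weight bookkeeping in the reflection step, namely verifying that the exponent of $x$ drops by precisely $\beta$ uniformly over all paths; everything else is a direct application of Corollary~\ref{C_llp} together with the fact that reflection is a weight-respecting bijection on the relevant path families.

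As a purely algebraic alternative, I would instead expand the left-hand side by Corollary~\ref{C_sj}, writing $\sP_n^{(0,-\beta)}(x)=\sum_{k=0}^{n}(-1)^{n-k}x^{k}\binom{n}{k}\binom{n-\beta+k}{n}$. Here the hypothesis $n\geq\beta$ guarantees that for $0\leq k<\beta$ the integer $n-\beta+k$ lies in $\{0,1,\dots,n-1\}$, so that $\binom{n-\beta+k}{n}=0$ and the sum effectively begins at $k=\beta$. Setting $k=\beta+j$ factors out $x^{\beta}$, and comparing the remaining sum with the expansion of $\sP_{n-\beta}^{(0,\beta)}(x)$ furnished by Corollary~\ref{C_sj} reduces the claim to the elementary subset-of-a-subset identity $\binom{n}{\beta+j}\binom{n+j}{n}=\binom{n-\beta}{j}\binom{n+j}{n-\beta}$, which holds term by term. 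The main obstacle in this route is merely the reindexing and the recognition that both products equal the multinomial coefficient $\binom{n+j}{\beta+j,\,j,\,n-\beta-j}$.
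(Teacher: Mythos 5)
Your main argument is correct and is essentially the paper's own proof: the diagonal reflection you use is exactly the paper's device of rescaling the north and northeast weights, swapping the two axes, and rescaling back, and both arguments come down to the identity $d_{n-\beta,n}^{1,x,-1}=x^{\beta}\,d_{n,n-\beta}^{1,x,-1}$ obtained by tracking how many weight-$x$ steps the reflection removes (your purely algebraic alternative via Corollary~\ref{C_sj} is also sound, but secondary). One remark: what you actually prove, namely $\sP_n^{(0,-\beta)}(x)=x^{\beta}\,\sP_{n-\beta}^{(0,\beta)}(x)$, is the statement intended by the abstract and introduction; the right-hand side $\sP_{n-\beta}(x)$ as printed in the Proposition (a shifted Legendre polynomial) is a typo, as the case $\beta=1$, $n=2$ already shows, where $\sP_2^{(0,-1)}(x)=3x^2-2x=x\,\sP_1^{(0,1)}(x)\neq x\,\sP_1(x)$.
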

\begin{proof}
By Corollary~\ref{C_llp} we have
$$
\sP_n^{(0,\beta)}(x)=d_{n+\beta,n}^{1,x,-1}.
$$ 
Dividing the weight of all north and northeast steps by $x$ we may take
out a factor of $x^n$ and get
$$
\sP_n^{(0,\beta)}(x)=x^n d_{n+\beta,n}^{1,1,-1/x}.
$$  
Since each east and each north step has the same weight, we may swap the
horizontal and vertical axis and get
$$
\sP_n^{(0,\beta)}(x)=x^n d_{n,n+\beta}^{1,1,-1/x}.
$$  
Finally, multiplying both sides by $x^{\beta}$ and using the arising
factor of $x^{n+\beta}$ on the right hand side to multiply the weight of
each north and each northeast step by $x$ yields
$$
x^{\beta} \sP_n^{(0,\beta)}(x)=d_{n,n+\beta}^{1,x,-1}.
$$  
Here the right hand side is equal to $\sP_{n+\beta}^{(0,\beta)}(x)$ 
by Corollary~\ref{C_llp}. The statement follows by replacing $n$ with
$n-\beta$. 
\end{proof}
Proposition~\ref{P_bneg} does not apply to $\sP^{(0,-\beta)}_n(x)$ when 
$n<\beta$. Table~\ref{t_bneg} contains these polynomials and
$\sP^{(0,-6)}_6(x)$ for $\beta=6$.

\begin{table}[h]
\begin{tabular}{c||r|r|r|r|r|r||r}
$n$&$0$&$1$&$2$&$3$&$4$&$5$&$6$\\
\hline
$\sP^{(0,-6)}_n(x)$&$1$&$5-4x$&$3x^2-12x+10$&$3x^2-12x+10$&$5-4x$&$1$&$x^6$\\
\end{tabular}
\caption{$\sP^{(0,-6)}_n(x)$ for $n\leq 6$}
\label{t_bneg}
\end{table}

The list of polynomials exhibits a certain symmetry which is most easily
shown for the transformed Jacobi polynomials 
$P^{(\alpha,\beta)}_n(2x+1)$. By \eqref{eq:ssPP}, for the values of $n$
allowed by Theorem~\ref{thm:Askey-Romanovski}, these polynomials 
are equal to the Romanovski polynomials $\ssP^{(\alpha,\beta)}_n(x)$.
\begin{definition}
We extend the definition of the Romanovski polynomials
$\ssP^{(\alpha,\beta)}_n(x)$ to all values of $n$ using \eqref{eq:ssPP}.
\end{definition}
As an immediate consequence of (\ref{E_J1}) and \eqref{eq:ssPP} we
obtain the following. 
\begin{equation}
\label{E_ssP}
\ssP_n^{(\alpha,\beta)}(x)
=\sum_{j=0}^n\binom{n+\alpha+\beta+j}{j}\binom{n+\alpha}{n-j} 
x^j.
\end{equation}
In particular, substituting $\alpha=0$ and $-\beta$ for $\beta$ yields
$$
\ssP_n^{(0,-\beta)}(x)
=\sum_{j=0}^n\binom{n-\beta+j}{j}\binom{n}{j} x^j,
$$
which may be rewritten as
\begin{equation}
\label{E_ssP0}
\ssP_n^{(0,-\beta)}(x)
=\sum_{j=0}^n\binom{\beta-1-n}{j}\binom{n}{j} (-x)^j.
\end{equation}
\begin{corollary}
\label{C_bneg}
For $\beta \in {\mathbb N}$ and $0\leq n\leq \beta-1$ we have
$$
\ssP^{(0,-\beta)}_n(x)=\ssP^{(0,-\beta)}_{\beta-1-n}(x),
$$
implying also 
$$
P^{(0,-\beta)}_n(x)=P^{(0,-\beta)}_{\beta-1-n}(x)\quad \mbox{and}\quad
\sP^{(0,-\beta)}_n(x)=\sP^{(0,-\beta)}_{\beta-1-n}(x).
$$
\end{corollary}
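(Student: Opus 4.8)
The plan is to prove the first identity, the one for the Romanovski polynomials $\ssP^{(0,-\beta)}_n(x)$, directly from the explicit expansion in \eqref{E_ssP0}, and then to deduce the two remaining identities for $P^{(0,-\beta)}_n(x)$ and $\sP^{(0,-\beta)}_n(x)$ by applying the affine substitutions that relate these three families through \eqref{eq:ssPP} and \eqref{P_sjdef}.

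For the Romanovski identity I would write out both sides using \eqref{E_ssP0}. On the one hand,
$$
\ssP_n^{(0,-\beta)}(x)=\sum_{j=0}^n\binom{\beta-1-n}{j}\binom{n}{j}(-x)^j,
$$
and on the other hand, substituting $\beta-1-n$ for $n$ (and noting $\beta-1-(\beta-1-n)=n$),
$$
\ssP_{\beta-1-n}^{(0,-\beta)}(x)=\sum_{j=0}^{\beta-1-n}\binom{n}{j}\binom{\beta-1-n}{j}(-x)^j.
$$
The decisive observation is that the general summand $\binom{\beta-1-n}{j}\binom{n}{j}(-x)^j$ is symmetric under the exchange $n\leftrightarrow\beta-1-n$, so the two expressions differ only in their upper summation limit, $n$ versus $\beta-1-n$. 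Since $0\leq n\leq\beta-1$, both $n$ and $\beta-1-n$ are nonnegative, and whenever $j$ exceeds $\min(n,\beta-1-n)$ at least one of the factors $\binom{n}{j}$, $\binom{\beta-1-n}{j}$ vanishes. Hence each sum may be truncated at $j=\min(n,\beta-1-n)$ without changing its value, and both reduce to the common polynomial $\sum_{j=0}^{\min(n,\beta-1-n)}\binom{\beta-1-n}{j}\binom{n}{j}(-x)^j$. This yields $\ssP^{(0,-\beta)}_n(x)=\ssP^{(0,-\beta)}_{\beta-1-n}(x)$.

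I would then obtain the remaining two identities by changes of variable. By \eqref{eq:ssPP} we have $\ssP_n^{(0,-\beta)}(x)=P_n^{(0,-\beta)}(2x+1)$, so the identity just established is an equality of polynomials in $x$; since $2x+1$ takes every complex value as $x$ ranges over $\mathbb{C}$, it follows that $P^{(0,-\beta)}_n(x)=P^{(0,-\beta)}_{\beta-1-n}(x)$ as polynomials. Substituting $2x-1$ for the argument and invoking the definition \eqref{P_sjdef} of the shifted Jacobi polynomials then gives $\sP^{(0,-\beta)}_n(x)=\sP^{(0,-\beta)}_{\beta-1-n}(x)$.

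I do not expect a serious obstacle here: the entire content of the corollary is the manifest symmetry of the summand in \eqref{E_ssP0} under $n\leftrightarrow\beta-1-n$. The only point requiring a moment of care is reconciling the two different upper summation limits, which is handled by observing that the ``extra'' terms in whichever sum runs further all carry a vanishing binomial coefficient.
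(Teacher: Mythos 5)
Your proof is correct and follows essentially the same route as the paper: the corollary is stated there as an immediate consequence of the expansion \eqref{E_ssP0}, whose summand is manifestly symmetric under $n\leftrightarrow\beta-1-n$, with the discrepancy in upper summation limits absorbed by vanishing binomial coefficients exactly as you observe. Your deduction of the $P$ and $\sP$ versions via the affine substitutions \eqref{eq:ssPP} and \eqref{P_sjdef} is also the intended one.
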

Because of Corollary~\ref{C_bneg}, the Jacobi polynomials
$\{P^{(0,-\beta)}_n(x)\}_{n\geq 0}$ or, equivalently,  
the polynomials $\{\ssP^{(0,-\beta)}_n(x)\}_{n\geq 0}$
can not form an infinite orthogonal
polynomial sequence. There is a chance to having such a sequence only up to
$n=\lfloor(\beta-1)/2\rfloor$. 
Theorem~\ref{thm:Askey-Romanovski} guarantees having a finite sequence
of orthogonal polynomials up to {\em almost} that degree. 
Indeed, substituting $\alpha=0$ and replacing $\beta$ with $-\beta$ in 
Theorem~\ref{thm:Askey-Romanovski}, the inequality 
$\alpha+\beta+N+1<0$ becomes $N<\beta-1$ which, for integers, is
equivalent to $N\leq \beta-2$. Thus, Theorem~\ref{thm:Askey-Romanovski}
states the orthogonality of the polynomials $\ssP^{(0,-\beta)}_n(x)$ of
degree at most $(\beta-2)/2$ only. 
\begin{corollary}[Askey-Romanovski]
\label{cor:Askey-Romanovski}
For any positive integer $\beta\geq 2$, the polynomials
$\{\ssP^{(0,-\beta)}_n(x)\::\: n\leq \lfloor(\beta-2)/2\rfloor\}$
form a finite sequence of orthogonal polynomials.  
\end{corollary}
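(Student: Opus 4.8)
The plan is to obtain this corollary as a direct specialization of Theorem~\ref{thm:Askey-Romanovski}, with the only genuine work being a check that the resulting constants do not degenerate. First I would substitute $\alpha=0$ and replace the theorem's second parameter by $-\beta$, where now $\beta\geq 2$ is our positive integer. Since the macro identifies $\ssP$ with $R$, the two families appearing in the orthogonality relation of Theorem~\ref{thm:Askey-Romanovski} are one and the same Romanovski-Jacobi family $\ssP^{(0,-\beta)}_n(x)$, so after substitution the relation reads $\int_0^\infty \ssP^{(0,-\beta)}_n(x)\,\ssP^{(0,-\beta)}_m(x)\,(1+x)^{-\beta}\,dx=c_n\,\delta_{m,n}$ for an explicit constant $c_n$.

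Next I would verify the hypotheses and pin down the range. The condition $\alpha>-1$ becomes $0>-1$, which holds. The convergence condition $\alpha+\beta+N+1<0$ becomes $N+1-\beta<0$, i.e.\ $N<\beta-1$; for integers this is $N\le\beta-2$, and taking the largest admissible value $N=\beta-2$ yields orthogonality for all $m,n\le N/2=(\beta-2)/2$, that is, $m,n\le\lfloor(\beta-2)/2\rfloor$. This simultaneously secures convergence of the integral: with both degrees at most $\lfloor(\beta-2)/2\rfloor$ one has $m+n\le\beta-2$, so the integrand decays like $x^{m+n-\beta}$ with $m+n-\beta\le-2$.

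To conclude that these polynomials genuinely form an orthogonal polynomial sequence (with nonzero norms, not merely a pairwise-orthogonal list) I still need two facts for each admissible $n$: that $\ssP^{(0,-\beta)}_n$ has degree exactly $n$, and that $c_n\neq 0$. The degree claim I would read off \eqref{E_ssP0}, whose top coefficient is $(-1)^n\binom{\beta-1-n}{n}$; since $2n\le\beta-2$ forces $\beta-1-n\ge n+1>n\ge 0$, this binomial coefficient is a strictly positive integer. For the norm I would substitute the same parameters into the right-hand side of Theorem~\ref{thm:Askey-Romanovski}, obtaining a constant proportional to $\Gamma(\beta-n)\,\Gamma(n+1)\,(1-\beta)_n\big/\big((2n+1-\beta)\,n!\,\Gamma(\beta)\big)$, and check it factor by factor: $\Gamma(\beta-n)$ is finite and nonzero because $\beta-n\ge 2$; $\Gamma(n+1)=n!$ and $\Gamma(\beta)=(\beta-1)!$ are nonzero; the Pochhammer $(1-\beta)_n$ does not vanish because each factor $j-\beta$ with $1\le j\le n<\beta$ is negative; and the denominator factor $2n+1-\beta$ is nonzero since $2n\le\beta-2$ gives $2n+1-\beta\le-1$.

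The main obstacle, such as it is, lies in this last verification rather than in any conceptual difficulty: one must confirm that none of the gamma factors in the norm formula sits at a pole and that the denominator $2n+1-\beta$ does not vanish. That denominator is the delicate point, since it is exactly what forces the cutoff $\lfloor(\beta-2)/2\rfloor$ instead of $\lfloor(\beta-1)/2\rfloor$: for odd $\beta$ the excluded value $n=(\beta-1)/2$ makes $2n+1-\beta=0$. Noticing that this classical bound fails to be tight at precisely that point is what motivates the sharper combinatorial treatment of the odd-$\beta$ case pursued later.
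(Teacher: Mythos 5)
Your proof is correct and takes essentially the same route as the paper: specialize Theorem~\ref{thm:Askey-Romanovski} at $\alpha=0$ with the second parameter replaced by $-\beta$, so that the hypothesis $\alpha+\beta+N+1<0$ becomes $N\leq\beta-2$ for integers and orthogonality follows for all $m,n\leq\lfloor(\beta-2)/2\rfloor$. Your supplementary verifications --- that $\ssP^{(0,-\beta)}_n$ has exact degree $n$ via the leading coefficient of \eqref{E_ssP0}, that the norm constant is nonzero (in particular that $2n+1-\beta\neq 0$ in the admitted range), and that the integral converges --- are all sound and are in fact more careful than the paper, which simply cites the theorem.
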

We will provide a combinatorial proof of a strengthened version 
of Corollary~\ref{cor:Askey-Romanovski} that, for odd values of $\beta$,
allows adding the polynomial $\ssP^{(0,-\beta)}_{(\beta-1)/2}(x)$ to our
finite sequence of orthogonal polynomials, thus filling the only
remaining gap. Note that the linear functional 
\begin{equation}
\label{eq:lbeta}
{\mathcal L}_{\beta}(x^k):=\int_0^{\infty} x^k(1+x)^{-\beta}\ dx
\quad\mbox{for $0\leq k\leq \beta-2$,}
\end{equation}
implicitly used in Theorem~\ref{thm:Askey-Romanovski}, is by
definition~\cite[6.2.1]{Abramowitz-Stegun} an evaluation of the {\em beta
function} $B(z,w)$. Thus we obtain 
\begin{equation}
\label{eq:lbeta2}
{\mathcal L}_{\beta}(x^k)=B(k+1,\beta-1-k)=
\int_0^{1}
\left(\frac{t}{1-t}\right)^k (1-t)^{\beta-2}\ dt.
\end{equation}
Since $B(z,w)=\Gamma(z)\Gamma(w)/\Gamma(z+w)$
(see~\cite[6.2.2]{Abramowitz-Stegun})
and $\Gamma(z+1)=z!$ holds for all $z\in{\mathbb N}$ (see
by~\cite[6.1.15]{Abramowitz-Stegun}), we also have  
\begin{equation}
\label{eq:lbeta3}
{\mathcal L}_{\beta}(x^k)=\frac{k!\cdot (\beta-2-k)!}{(\beta-1)!}
\quad\mbox{for $0\leq k\leq \beta-2$.} 
\end{equation}
Keeping these formulas in mind, we are ready to present a combinatorial
proof of our extended version of
Corollary~\ref{cor:Askey-Romanovski}.
\begin{theorem}
\label{T_bneg}
Let $\beta\geq 2$ be any positive integer and let ${\mathcal L}_{\beta}$ be the
linear functional defined on the  vector space 
$\{p(x)\in {\mathbb C}[x]\::\: \deg (p)\leq \beta-2\}$ by \eqref{eq:lbeta}.
Then the Romanovski polynomials 
$\{\ssP^{(0,-\beta)}_n(x)\::\: 0\leq n\leq (\beta-2)/2\}$
form an orthogonal basis in the with respect to inner product 
$\langle f, g\rangle :={\mathcal L}_{\beta}(f\cdot g)$ on 
the vector space $\{p(x)\in {\mathbb C}[x]\::\: \deg (p)\leq (\beta-2)/2\}$.
For odd $\beta$ we may extend ${\mathcal L}_{\beta}$ and the induced inner
product to polynomials of degree at most
$(\beta-1)/2$ by making ${\mathcal L}_{\beta}(x^{\beta-1})$ large enough to make the
determinant of the $(\beta+1)/2\times (\beta+1)/2$ matrix 
$$
M_{\beta}:=\left(
\begin{array}{cccc}
{\mathcal L}_{\beta}(x^0) &{\mathcal L}_{\beta}(x^1)&\cdots & {\mathcal L}_{\beta}(x^{(\beta-1)/2})\\
{\mathcal L}_{\beta}(x^1) &{\mathcal L}_{\beta}(x^2)&\cdots & {\mathcal L}_{\beta}(x^{(\beta-1)/2+1})\\
\vdots&\vdots& \ddots&\vdots\\
{\mathcal L}_{\beta}(x^{(\beta-1)/2})& {\mathcal L}_{\beta}(x^{(\beta-1)/2+1})&\cdots &{\mathcal L}_{\beta}(x^{\beta-1})\\
\end{array}
\right)
$$
positive. The polynomial  $\ssP^{(0,-\beta)}_{(\beta-1)/2}(x)$ may then
be added to the orthogonal basis. 
\end{theorem}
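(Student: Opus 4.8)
The plan is to split the theorem into an orthogonality half and a normalization half, and to observe that only the latter is sensitive to the even/odd distinction. First I would reduce everything to moment evaluations. Since by \eqref{E_ssP0} the polynomial $\ssP_n^{(0,-\beta)}(x)$ has degree exactly $n$ whenever $n\le(\beta-1)/2$, and since these polynomials have pairwise distinct degrees, to prove $\langle \ssP_m^{(0,-\beta)},\ssP_n^{(0,-\beta)}\rangle=0$ for $m<n$ it suffices to show
$$
{\mathcal L}_{\beta}\bigl(x^i\,\ssP_n^{(0,-\beta)}(x)\bigr)=0\qquad\text{for all } 0\le i<n .
$$
The key observation is that when $m<n\le(\beta-1)/2$ we have $i+n\le 2n-1\le\beta-2$, so every monomial occurring in $x^i\,\ssP_n^{(0,-\beta)}(x)$ has degree at most $\beta-2$ and is therefore evaluated by the fixed moments \eqref{eq:lbeta3}. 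In particular this relation, and hence the vanishing of every off-diagonal inner product, never involves the free value ${\mathcal L}_{\beta}(x^{\beta-1})$ that is adjoined in the odd case.

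To establish the displayed identity I would expand $\ssP_n^{(0,-\beta)}$ using \eqref{E_ssP0} and substitute \eqref{eq:lbeta3}, so that $(\beta-1)!\,{\mathcal L}_{\beta}(x^i\,\ssP_n^{(0,-\beta)}(x))$ becomes
$$
\sum_{j=0}^n (-1)^j\binom{\beta-1-n}{j}\binom{n}{j}(i+j)!\,(\beta-2-i-j)! .
$$
Rewriting the factorials as Pochhammer symbols turns this (up to the nonzero factor $i!(\beta-2-i)!$) into the terminating series ${}_3F_2(-n,\,n+1-\beta,\,i+1;\,1,\,i+2-\beta;\,1)$. A direct check shows $1+(-n)+(n+1-\beta)+(i+1)=1+(i+2-\beta)$, so the series is balanced and Saalsch\"utz's theorem applies, evaluating it to a closed form carrying the factor $(-i)_n$ in the numerator. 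Since $(-i)_n=0$ exactly when $0\le i\le n-1$, the sum vanishes for $i<n$, which is what we need. A combinatorial alternative would be a sign-reversing involution on the index $j$, in the spirit of the proof of Proposition~\ref{P_orth}, but I expect the balanced-series evaluation to be the cleanest route.

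For even $\beta$ the target degree $(\beta-2)/2$ is an integer, and any product of two polynomials of degree at most $(\beta-2)/2$ has degree at most $\beta-2$; thus on this space the inner product is, by \eqref{eq:lbeta}, the genuine integral $\int_0^{\infty} f(x)g(x)(1+x)^{-\beta}\,dx$ against a positive measure whose relevant moments converge. This form is positive definite, so the norms $\langle \ssP_n^{(0,-\beta)},\ssP_n^{(0,-\beta)}\rangle$ are automatically nonzero, and together with the previous paragraph this exhibits $\{\ssP_n^{(0,-\beta)}:0\le n\le(\beta-2)/2\}$ as an orthogonal basis, recovering Corollary~\ref{cor:Askey-Romanovski}.

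For odd $\beta$ set $r:=(\beta-1)/2$. By the first paragraph $\ssP_r^{(0,-\beta)}$ is already orthogonal to every lower $\ssP_m^{(0,-\beta)}$ using only the fixed moments; the sole gap is that its norm involves ${\mathcal L}_{\beta}(x^{\beta-1})$, the one moment that is not a convergent beta integral. Here I would invoke the Hankel-determinant criterion for the existence of an orthogonal polynomial sequence (quasi-definiteness, cf.\ \cite{Chihara}): the leading principal minors $\Delta_0,\dots,\Delta_{r-1}$ use only moments of degree at most $\beta-3$ and are positive by the positive-definite argument of the even case, while $\Delta_r=\det M_{\beta}$ is an affine function of ${\mathcal L}_{\beta}(x^{\beta-1})$ whose leading coefficient is the cofactor $\Delta_{r-1}>0$. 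Hence choosing ${\mathcal L}_{\beta}(x^{\beta-1})$ large enough forces $\det M_{\beta}>0$, the extended functional is quasi-definite up to degree $r$, and $\langle \ssP_r^{(0,-\beta)},\ssP_r^{(0,-\beta)}\rangle$ is a nonzero multiple of $\Delta_r/\Delta_{r-1}$; this lets us adjoin $\ssP_{(\beta-1)/2}^{(0,-\beta)}$. The main obstacle I anticipate is the bookkeeping of the second paragraph---correctly matching the sum to a balanced ${}_3F_2$ and isolating the vanishing factor---together with the care needed in this last paragraph to confirm that the single adjusted moment leaves all lower orthogonality relations intact (which it does, by the first paragraph) while making the top norm nonzero.
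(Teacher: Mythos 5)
Your proposal is correct, and its overall architecture coincides with the paper's: the same reduction to ${\mathcal L}_{\beta}(x^i\,\ssP_n^{(0,-\beta)}(x))=0$ for $i<n$, the same observation that $i+n\le 2n-1\le\beta-2$ keeps all these relations inside the range of the fixed beta-integral moments (so the adjoined moment ${\mathcal L}_{\beta}(x^{\beta-1})$ never interferes with orthogonality), the same positive-definiteness argument for the even case, and essentially the same principal-minor/Hankel argument for adjoining $\ssP_{(\beta-1)/2}^{(0,-\beta)}$ when $\beta$ is odd. Where you genuinely diverge is in the evaluation of the central identity \eqref{E-borth2}: the paper normalizes the sum into a product of three binomial coefficients and kills it with a sign-reversing involution on triples $(X,A,B)$ (toggling an element $c\in\{1,\dots,n\}$ absent from both multisets in and out of $X$), in keeping with the paper's stated goal of a combinatorial proof; you instead recognize $\sum_j(-1)^j\binom{\beta-1-n}{j}\binom{n}{j}(i+j)!(\beta-2-i-j)!$ as $i!\,(\beta-2-i)!\cdot{}_3F_2(-n,\,n+1-\beta,\,i+1;\,1,\,i+2-\beta;\,1)$, verify it is Saalschützian, and read off the factor $(-i)_n$ from Saalschütz's theorem. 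This checks out: the closed form is $(\beta-n)_n(-i)_n/\bigl(n!\,(\beta-n-i-1)_n\bigr)$, and the constraints $i<n\le(\beta-1)/2$ guarantee both that the denominator parameters never hit zero within the terminating range (the offending index $j$ with $i+2-\beta+j-1=0$ would need $j-1=\beta-2-i\ge n$) and that the denominator Pochhammer symbols are nonzero, so the vanishing for $0\le i\le n-1$ is legitimate. The hypergeometric route is shorter and arguably cleaner, at the cost of importing Saalschütz's theorem; the paper's involution is longer but self-contained and combinatorial, which is the point of the paper. One small piece of bookkeeping worth making explicit in your write-up is the nonvanishing of the leading coefficient $(-1)^n\binom{\beta-1-n}{n}$ of $\ssP_n^{(0,-\beta)}(x)$ for $n\le(\beta-1)/2$ (you assert the degree claim but it deserves a line), since both the basis statement and the identification of $\ssP_r^{(0,-\beta)}$ with a nonzero multiple of the monic orthogonal polynomial of degree $r=(\beta-1)/2$ depend on it.
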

\begin{proof}
The fact that the bilinear operation $\langle f, g\rangle
:={\mathcal L}_{\beta}(f\cdot g)$ in an inner product is a
consequence of the integral form of ${\mathcal L}_{\beta}(x^k)$ stated in
\eqref{eq:lbeta2}. Using this equation it is easy deduce 
\begin{equation}
\label{E_ip}
\langle f, g\rangle =\int_{0}^1 f\left(\frac{t}{1-t}\right)\cdot
g\left(\frac{t}{1-t}\right)\cdot (1-t)^{\beta-2}\ dt\quad\mbox{if
  $\deg(f)+\deg(g)\leq \beta-2$.}
\end{equation}
Note that the condition $\deg(f)+\deg(g)\leq \beta-2$ implies that the
degree of $(1-t)$ in the denominator of $f(t/(1-t))\cdot
g(t/(1-t))$ is at most $\beta-2$, and so the integrand on the left hand
side is a polynomial function. Thus the bilinear function is in deed an
inner product. To prove the first half of the theorem, it is
sufficient to show 
$$
\langle x^m, \ssP^{(0,-\beta)}_n(x)\rangle
=0\quad\mbox{for $m<n\leq \frac{\beta-2}{2}$}.
$$
By \eqref{eq:lbeta3} this is equivalent to 
$$
\sum_{j=0}^n\binom{\beta-1-n}{j}\binom{n}{j} (-1)^j (m+j)! (\beta-2-m-j)!
=0\quad \mbox{for $m<n\leq \frac{\beta-2}{2}$}.
$$
Dividing both sides by $(\beta-1-n)! m! (n-m-1)!$ yields the equivalent
form
\begin{equation}
\label{E-borth2}
\sum_{j=0}^n (-1)^j \binom{n}{j} \binom{m+j}{m}\binom{\beta-2-m-j}{n-m-1}
=0\quad \mbox{for $m<n\leq \frac{\beta-2}{2}$}.
\end{equation}
The left hand side of (\ref{E-borth2}) is the total weight of all
triplets $(X,A,B)$ where 
\begin{itemize}
\item[(i)] $X$ is a subset of $\{1,2,\ldots,n\}$;
\item[(ii)] $A=\{a_1,\ldots,a_m\}$ is an $m$-element multiset such that
  each $a_i$ belongs to $X\cup\{0\}$; 
\item[(iii)] $B=\{b_1,\ldots,b_{n-m-1}\}$ is an $(n-m-1)$-element
  multiset such that each $b_j$ belongs to
  $\{1,\ldots,\beta-n\}\setminus X$.
\end{itemize}
The weight of the triplet $(X,A,B)$ is $(-1)^{|X|}$. Indeed, for a
fixed $j$ there are $\binom{n}{j}$ ways to select $X$. Once $X$ is
fixed, there are 
$$
\binom{(j+1)+m-1}{m}=\binom{m+j}{m}
$$
ways to select the multiset $A$ on $X\cup \{0\}$ and 
$$
\binom{(\beta-n-j)+(n-m-1)-1}{n-m-1}=\binom{\beta-m-2-j}{n-m-1}
$$
ways to select the multiset $B$ on $\{1,\ldots,\beta-n-1\}\setminus X$. 
Let us now fix the multisets $A$ and $B$ first. Since $|A|+|B|=n-1$,
there is at least one $c\in \{1,\ldots,n\}$ that does not appear in $A$,
nor in $B$. Then, for a subset $X\subset \{1,\ldots,n\}\setminus \{c\}$, the
triplet $(X,A,B)$ satisfies the above conditions if and only if the
triplet $(X\cup\{c\},A,B)$ does. The weight of these triplets cancel
each other.

For the case when $\beta$ is odd, observe first that the above proof of
orthogonality can be adapted to $m<n\leq (\beta-1)/2$ without any
changes since we only need to use the value of ${\mathcal L}_{\beta}(x^k)$ for $k\leq
\beta-2$. The only thing left to show is that we can set the value of
${\mathcal L}_{\beta}(x^{\beta-1})$ in such a way that we get a positive definite inner
product. It is well known that a bilinear form defined by a symmetric real
matrix is positive definite if the principal minors of the matrix are
positive. Thus we only need to make sure that the principal minors of
$M_{\beta}$ are positive. The fact that the restriction of
${\mathcal L}_{\beta}$ to the vector space of polynomials of degree at most $(\beta-3)/2$ is
positive definite implies the positivity of all principal minors of
$M_{\beta}$ except for $\det(M_{\beta})$. Finally, consider
${\mathcal L}_{\beta}(x^{\beta})$ as a variable, and expand $\det(M_{\beta})$ by its last
column. The determinant becomes a linear function of ${\mathcal L}_{\beta}(x^{\beta})$
whose coefficient is the $(\beta-3)/2\times (\beta-3)/2$ principal minor
of $M_{\beta}$, a positive number. Therefore $\det(M_{\beta})>0$ holds
for any sufficiently large ${\mathcal L}_{\beta}(x^{\beta})$. 
\end{proof}

\section{Weighted Schr\"oder numbers and iterated antiderivatives of
  Legendre polynomials}
\label{s_S}

We define a {\em Schr\"oder path} from $(0,0)$ to $(n,n)$ 
as a Delannoy path not going above the line $y=x$. In analogy to
Definition~\ref{D_wdn} we may introduce weighted Schr\"oder numbers as
follows.  
\begin{definition}
\label{D_wsn}
Let $u$, $v$, $w$ be commuting variables. We define the {\em weighted
  Schr\"oder numbers $s_{n}^{u,v,w}$} as the total weight of all
  Schr\"oder paths from $(0,0)$ to $(n,n)$, where each east step $(0,1)$
  has weight $u$, each north step has weight $v$, and each northeast
  step has weight $w$. The weight of a
  lattice path is the product of the weights of its steps. 
\end{definition}
Introducing the {\em Schr\"oder polynomials} by 
\begin{equation}
\label{E_sdef}
S_n(x):=s_{n}^{1,x,-1}\quad\mbox{for $n\geq 0$},
\end{equation}
we have the following formula.
\begin{proposition}
\label{P_SP}
For $n\geq 1$ the Schr\"oder polynomials are given by 
$$
S_n(x)=\frac{x-1}{(n+1)x}\sP^{(1,-1)}_{n}(x).
$$
\end{proposition}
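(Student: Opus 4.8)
The plan is to expand both sides as explicit polynomials in $x$ and match coefficients. On the right-hand side I would apply Proposition~\ref{P_sj} with $\alpha=1$ and $\beta=-1$ to obtain
$$(x-1)\sP^{(1,-1)}_n(x)=\sum_{k=0}^{n+1}(-1)^{n+1-k}x^k\binom{n+1}{k}\binom{n-1+k}{n}.$$
The key observation is that the $k=0$ term vanishes, since $\binom{n-1}{n}=0$ for $n\geq 1$; this is exactly where the hypothesis $n\geq 1$ enters, and why $S_0(x)$ is excluded. Hence the right-hand side is divisible by $x$, and after reindexing $k=j+1$, dividing by $(n+1)x$, and using $\frac{1}{n+1}\binom{n+1}{j+1}=\frac{1}{j+1}\binom{n}{j}$, I expect to reach
$$\frac{x-1}{(n+1)x}\sP^{(1,-1)}_n(x)=\sum_{j=0}^{n}(-1)^{n-j}\frac{1}{j+1}\binom{n}{j}\binom{n+j}{n}\,x^j.$$

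For the left-hand side I would read $S_n(x)$ directly off \eqref{E_sdef} and Definition~\ref{D_wsn}. A Schr\"oder path to $(n,n)$ with $n-j$ northeast steps necessarily has $j$ north and $j$ east steps, so under the weighting $u=1$, $v=x$, $w=-1$ it contributes $(-1)^{n-j}x^j$. Thus $S_n(x)=\sum_{j=0}^n(-1)^{n-j}c_{n,j}\,x^j$, where $c_{n,j}$ denotes the number of Schr\"oder paths to $(n,n)$ with $j$ north, $j$ east, and $n-j$ northeast steps. Comparing with the previous display, the proposition reduces to the single enumerative identity $c_{n,j}=\frac{1}{j+1}\binom{n}{j}\binom{n+j}{n}$.

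The main work, and the step I expect to be the real obstacle, is establishing this count --- in particular producing the Catalan-type factor $\frac{1}{j+1}$ out of the ``below the diagonal'' constraint. My plan is to decompose each path: staying weakly below $y=x$ is equivalent to requiring that in every prefix the number of north steps is at least the number of east steps, because a northeast step advances both coordinates equally and so is irrelevant to the constraint. Hence the subsequence of north and east steps is a Dyck word on $j$ up and $j$ down steps, of which there are $C_j=\frac{1}{j+1}\binom{2j}{j}$, while the $n-j$ northeast steps may be inserted independently into the $2j+1$ gaps of this word, in $\binom{n+j}{2j}$ ways by stars and bars. This gives $c_{n,j}=\frac{1}{j+1}\binom{2j}{j}\binom{n+j}{2j}$, and the final equality with $\frac{1}{j+1}\binom{n}{j}\binom{n+j}{n}$ is the routine trinomial identity $\binom{2j}{j}\binom{n+j}{2j}=\binom{n}{j}\binom{n+j}{n}=\frac{(n+j)!}{j!\,j!\,(n-j)!}$, which completes the proof. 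Everything outside the count is elementary binomial manipulation.
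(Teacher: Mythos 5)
Your proposal is correct and follows essentially the same route as the paper: expand $S_n(x)$ combinatorially as $\sum_j \frac{(-1)^{n-j}}{j+1}\binom{2j}{j}\binom{n+j}{n-j}x^j$ (Catalan factor from the diagonal constraint, which the northeast steps do not affect, times the placement of the northeast steps), apply Proposition~\ref{P_sj} with $\alpha=1$, $\beta=-1$, observe the constant term vanishes for $n\geq 1$, reindex, and match coefficients via the same trinomial identities. The only differences are cosmetic (e.g.\ phrasing the northeast-step placement as stars and bars rather than as choosing positions among $n+j$ steps), so there is nothing substantive to flag.
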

\begin{proof}
First we show that the Schr\"oder polynomials satisfy 
\begin{equation}
\label{E_Sexp}
S_n(x)=\sum_{j=0}^{n}
\frac{(-1)^{n-j}}{j+1}\binom{2j}{j}\binom{n+j}{n-j} x^{j}
\quad\mbox{for $n\geq 1$.} 
\end{equation}
Let $j\in\{0,\ldots,n\}$ be the number of east steps in a Schr\"oder
path, the number of norths steps must be the same, the number of
northeast steps is $(n-j)$. The path is a Schr\"oder path if and only if 
at any given step the number of north steps thus
far does not exceed the number of east steps. The number of ways to arrange $j$
east steps and $j$ north steps satisfying this criterion is the Catalan
number $\binom{2j}{j}/(j+1)$, and there are $\binom{n+j}{n-j}$ ways to
determine the order of the northeast steps with respect to the east and
north steps. 
Since
$$
\binom{2j}{j}\binom{n+j}{n-j}
=\binom{n+j}{j,j,n-j}
=\binom{n+j}{n}\binom{n}{j}\quad\mbox{and}
$$
$$
\frac{1}{j+1}\binom{n}{j}=\frac{1}{n+1}\binom{n+1}{j+1},
$$
we may rewrite (\ref{E_Sexp}) as
\begin{equation}
\label{E_Sexp2}
S_n(x)=\frac{1}{n+1}\sum_{j=0}^{n}
(-1)^{n-j}\binom{n}{j+1}\binom{n+j}{n} x^{j}
\quad\mbox{for $n\geq 1$.} 
\end{equation}
On the other hand, Proposition~\ref{P_sj} gives
$$
(x-1)\sP^{(1,-1)}_{n}(x)=
\sum_{k=0}^{n+1} (-1)^{n+1-k} x^k \binom{n+1}{k}\binom{n-1+k}{n}
\quad\mbox{for $n\geq 1$.} 
$$
Observe that for $n\geq 1$ the constant term in the last sum is zero,
thus we may write 
$$
(x-1)\sP^{(1,-1)}_{n}(x)=
\sum_{k=1}^{n+1} (-1)^{n+1-k} x^k \binom{n}{k}\binom{n-1+k}{n}.
$$
The statement now follows from substituting $j+1$ into $k$ in the last
equation and comparing the result with (\ref{E_Sexp2}). 
\end{proof}
In analogy to Lemma~\ref{L_wcd} we have the following
\begin{lemma}
\label{L_wsn} The weighted Schr\"oder numbers are linked to the
Schr\"oder polynomials by the formula 
$$
s_n^{u,v,w}=(-w)^n S_n\left(-\frac{uv}{w}\right)
$$
\end{lemma}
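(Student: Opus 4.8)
The plan is to mirror, essentially verbatim, the re-weighting argument used in the proof of Lemma~\ref{L_wcd}; no genuinely new idea is needed. The key point to keep in mind is that re-weighting leaves each path itself untouched and alters only the monomial attached to it, so the set of Schr\"oder paths from $(0,0)$ to $(n,n)$ is exactly the same object throughout. In particular the Schr\"oder constraint $y\le x$ enters only through the choice of this underlying path set, and never needs to be revisited; I only have to track how the weight of a single fixed path transforms under two successive re-weightings.

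First I would record two bookkeeping facts about a Schr\"oder path $L$ from $(0,0)$ to $(n,n)$. Reading off the two coordinates of the endpoint, if $L$ has $j$ east steps then it has exactly $j$ north steps and $n-j$ northeast steps. Consequently (a) the number of east steps equals the number of north steps, and (b) the number of north steps plus the number of northeast steps equals $n$. Using (b), I would factor a copy of $-w$ out of each of the $n$ north-and-northeast steps, which converts the weighting $(u,v,w)$ into $(u,-v/w,-1)$ at the cost of a global factor $(-w)^n$; term by term, the weight $u^j v^j w^{n-j}$ of a path is reproduced as $(-w)^n\, u^j (-v/w)^j (-1)^{n-j}$, and this identity holds path by path, not merely in aggregate. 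Then, using (a), I would absorb the $u$'s into the north weight: since $u^j(-v/w)^j=(-uv/w)^j$ whenever the number of $u$-factors equals the number of north-factors, the weighting $(u,-v/w,-1)$ assigns each Schr\"oder path the same monomial as the weighting $(1,-uv/w,-1)$. Together these two steps give $s_n^{u,v,w}=(-w)^n\, s_n^{1,-uv/w,-1}$.

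Finally I would invoke the definition \eqref{E_sdef}, namely $S_n(x)=s_n^{1,x,-1}$, to rewrite $s_n^{1,-uv/w,-1}$ as $S_n(-uv/w)$, which yields the claimed formula $s_n^{u,v,w}=(-w)^n S_n(-uv/w)$. The only place requiring real care is the sign and exponent bookkeeping in the two re-weighting steps, together with the observation that each step is valid for every individual path rather than only after summation; once facts (a) and (b) are in hand this is entirely routine, so I do not anticipate a genuine obstacle, and the proof is noticeably shorter than that of Lemma~\ref{L_wcd} because it bypasses the explicit series and appeals only to the defining relation \eqref{E_sdef}.
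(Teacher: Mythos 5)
Your proposal is correct and is precisely the argument the paper intends: the lemma is stated ``in analogy to Lemma~\ref{L_wcd}'' with no written proof, and your two re-weighting steps (factoring $-w$ out of the $n$ north/northeast steps, then transferring the $u$'s onto the north steps using the equality of east and north step counts) reproduce that analogy exactly, with the final appeal to the definition \eqref{E_sdef} in place of the series comparison needed in Lemma~\ref{L_wcd}. The sign bookkeeping $(-w)^n(-uv/w)^j(-1)^{n-j}=u^jv^jw^{n-j}$ checks out, and your observation that re-weighting leaves the underlying path set (hence the Schr\"oder constraint) untouched is the right justification.
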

As a consequence of Proposition~\ref{P_SP} and Lemma~\ref{L_wsn}
we obtain
\begin{corollary}
\label{C_wsn}
For $n\geq 2$, the weighted Schr\"oder numbers are linked to the shifted Jacobi
polynomials by the formula
$$
s_n^{u,v,w}=\frac{(-w)^n
}{n+1}\left(1+\frac{w}{uv}\right)\sP^{(1,-1)}_{n}\left(-\frac{uv}{w}\right). 
$$
\end{corollary}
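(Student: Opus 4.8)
The plan is to derive the corollary by a single substitution that fuses its two cited ingredients. Lemma~\ref{L_wsn} recovers the weighted Schr\"oder numbers from the Schr\"oder polynomial through the evaluation $s_n^{u,v,w}=(-w)^n S_n\left(-\frac{uv}{w}\right)$, while Proposition~\ref{P_SP} factors the Schr\"oder polynomial itself as $S_n(x)=\frac{x-1}{(n+1)x}\,\sP^{(1,-1)}_{n}(x)$. Hence the whole argument reduces to inserting the factored form of $S_n$, specialized at $x=-uv/w$, into the evaluation provided by the Lemma.

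First I would substitute $x=-uv/w$ into Proposition~\ref{P_SP}, which for $n\geq 1$ gives
$$
S_n\left(-\frac{uv}{w}\right)=\frac{(-uv/w)-1}{(n+1)(-uv/w)}\,\sP^{(1,-1)}_{n}\left(-\frac{uv}{w}\right).
$$
Next I would simplify the scalar prefactor: writing its numerator as $-(uv+w)/w$ and its denominator as $-(n+1)uv/w$, the common factor $-1/w$ cancels, leaving $\frac{uv+w}{(n+1)uv}=\frac{1}{n+1}\left(1+\frac{w}{uv}\right)$, which is precisely the coefficient appearing in the corollary.

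Finally I would multiply through by $(-w)^n$ as prescribed by Lemma~\ref{L_wsn}, obtaining
$$
s_n^{u,v,w}=(-w)^n S_n\left(-\frac{uv}{w}\right)=\frac{(-w)^n}{n+1}\left(1+\frac{w}{uv}\right)\sP^{(1,-1)}_{n}\left(-\frac{uv}{w}\right),
$$
which is the asserted identity. I expect no genuine obstacle here; the only step that asks for any care is the cancellation of the factors $-1/w$ in the prefactor, and one should record that the manipulation is legitimate for every $n\geq 1$ to which Proposition~\ref{P_SP} applies, and in particular throughout the stated range $n\geq 2$.
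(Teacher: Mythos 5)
Your proposal is correct and is exactly the argument the paper intends: the corollary is stated there as an immediate consequence of Proposition~\ref{P_SP} and Lemma~\ref{L_wsn}, and your substitution $x=-uv/w$ with the simplification of the prefactor to $\frac{1}{n+1}\bigl(1+\frac{w}{uv}\bigr)$ is the routine verification the paper leaves to the reader.
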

In particular, substituting $u=v=w=1$ gives that the ordinary Schr\"oder
numbers are given by 
$$
s_n:=s_n^{1,1,1}=\frac{(-1)^n 2}{n+1}\sP^{(1,-1)}_{n}(-1)
=\frac{(-1)^{n}2}{n+1} P^{(1,-1)}_{n}(-3)
\quad\mbox{for $n\geq 2$}. 
$$
Using the swapping rule (\ref{E_jswap}) we obtain the following
remarkable variant of (\ref{E_dp1}).
\begin{equation}
\label{E_sp}
s_n=\frac{2}{n+1}\cdot P^{(-1,1)}_{n}(3)\quad\mbox{for $n\geq 1$}.
\end{equation} 
The same swapping rule for shifted Jacobi polynomials (\ref{E_sjswap})
allows to rewrite Corollary~\ref{C_wsn} as 
\begin{equation}
\label{E_wsn}
s_n^{u,v,w}=\frac{w^n
}{n+1}\left(1+\frac{w}{uv}\right)\sP^{(-1,1)}_{n}\left(\frac{uv}{w}+1\right)
\quad\mbox{for $n\geq 1$}.  
\end{equation} 
Central Delannoy numbers and Schr\"oder numbers satisfy an obvious recursion
formula, which may be generalized to their weighted variants as follows.
\begin{proposition}
\label{P_cdrec}
The weighted central Delannoy numbers and the weighted Schr\"oder
numbers satisfy the following formula.
$$
d_{n,n}^{u,v,w}=2uv\sum_{k=0}^{n-1} d_{k,k}^{u,v,w} s_{n-k-1}^{u,v,w}+w
d_{n-1,n-1}^{u,v,w}. 
$$
\end{proposition}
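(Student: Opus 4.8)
The plan is to give a combinatorial proof by a weight-preserving decomposition of Delannoy paths. Recall that $d_{n,n}^{u,v,w}$ is the total weight of all Delannoy paths from $(0,0)$ to $(n,n)$ and that $s_m^{u,v,w}$ (with $s_0^{u,v,w}=1$) is the total weight of those Delannoy paths from $(0,0)$ to $(m,m)$ that never rise above the line $y=x$. First I would fix a central Delannoy path $L$ from $(0,0)$ to $(n,n)$ and record the lattice points $(i,i)$ through which $L$ passes; since $L$ begins and ends on the diagonal this set contains $(n,n)$. Let $(k,k)$ with $0\le k\le n-1$ be the last such point preceding the endpoint, so that the terminal segment $L'$ of $L$ from $(k,k)$ to $(n,n)$ meets the diagonal only at its two endpoints. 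The prefix of $L$ from $(0,0)$ to $(k,k)$ may be any Delannoy path, contributing the factor $d_{k,k}^{u,v,w}$ and chosen independently of $L'$, so the weights multiply; the identity follows once the total weight of the admissible terminal segments $L'$ is identified.

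The key structural observation is that each step changes $x-y$ by $-1$ (east), $+1$ (north) or $0$ (northeast), so $L'$ cannot switch sides of the diagonal without passing through a diagonal lattice point. Hence $L'$ is of exactly one of three kinds: a single northeast step from $(k,k)$ to $(k+1,k+1)$, forced to have $k=n-1$ and contributing weight $w$; an excursion lying strictly below the diagonal except at its endpoints; or one lying strictly above. A below-excursion of diagonal length $m:=n-k$ must begin with a north step (to $(k+1,k)$) and end with an east step (from $(n,n-1)$), contributing the factor $uv$, while its interior runs from $(k+1,k)$ to $(n,n-1)$ subject to $x\ge y+1$. The single-northeast case accounts for the term $w\,d_{n-1,n-1}^{u,v,w}$.

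The main point to get right is the identification of the interior of an excursion with a Schr\"oder path. Translating the interior of the below-excursion so that it starts at the origin sends it to a Delannoy path from $(0,0)$ to $(m-1,m-1)$ on which the constraint $x\ge y+1$ becomes $x\ge y$, i.e.\ a Schr\"oder path, so its weight is $s_{m-1}^{u,v,w}=s_{n-k-1}^{u,v,w}$; thus each below-excursion contributes $uv\,s_{n-k-1}^{u,v,w}$. Reflecting across $y=x$ turns above-excursions into below-excursions while interchanging the roles, and hence the weights $u$ and $v$, of east and north steps; since every Schr\"oder path from $(0,0)$ to $(m-1,m-1)$ uses equally many east and north steps we have $s_{m-1}^{v,u,w}=s_{m-1}^{u,v,w}$, so above-excursions contribute the same $uv\,s_{n-k-1}^{u,v,w}$. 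Summing the two non-diagonal types over $k=0,\dots,n-1$ gives $2uv\sum_{k=0}^{n-1} d_{k,k}^{u,v,w}s_{n-k-1}^{u,v,w}$, and adding the single-northeast contribution $w\,d_{n-1,n-1}^{u,v,w}$ yields the asserted formula. The only real subtlety is this $u\leftrightarrow v$ symmetry of the weighted Schr\"oder numbers together with the careful bookkeeping of which step must begin and end an excursion; everything else is routine.
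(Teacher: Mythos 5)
Your proof is correct and follows essentially the same decomposition as the paper's: splitting a central Delannoy path at the last diagonal point $(k,k)$ before $(n,n)$ and treating the single-northeast-step case separately. You are in fact more careful than the paper on one point — the paper simply asserts a factor of $2$ for the choice of side, while you justify it via reflection and the $u\leftrightarrow v$ symmetry $s_{m}^{v,u,w}=s_{m}^{u,v,w}$ of the weighted Schr\"oder numbers.
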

\begin{proof}
For any Delannoy path from $(0,0)$ to $(n,n)$ there is a unique $(k,k)$
on the path where the path contains a lattice point of the form $(i,i)$
for the last time before $(n,n)$. Consider the contribution of all
Delannoy paths associated to the same fixed $(k,k)$, where 
$k\in \{0,1,\ldots,n-1\}$. The part of all such paths up to $(k,k)$
contributes a factor of $d_{k,k}^{u,v,w}$. The rest must begin with an
east step and end with a north step, contributing a factor of $uv$. For
$k<n-1$ the remaining steps 
contribute a factor of $2 s_{n-k-1}^{u,v,w}$. The factor of $2$
represents the choice of staying strictly above or below the line $y=x$
for the rest of the path. For $k=n-1$, besides the contribution of $2uv
s_{0}^{u,v,w}=2uv$ 
the possibility of adding a northeast step from $(n-1,n-1)$ to $(n,n)$
arises: this possibility is accounted for by the term $w
d_{n-1,n-1}^{u,v,w}$.    
\end{proof}
Using Corollary~\ref{C_llp} and (\ref{E_sdef}) we obtain:
\begin{corollary}
The shifted Legendre polynomials satisfy the recursion formula
$$
\sP_n(x)=2x\sum_{k=0}^{n-1} \sP_{k}(x) S_{n-k-1}(x)-\sP_{n-1}(x). 
$$
\end{corollary}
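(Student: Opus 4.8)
The plan is to obtain this recursion as the specialization of Proposition~\ref{P_cdrec} at the weighting $u=1$, $v=x$, $w=-1$, which is precisely the weighting that Corollary~\ref{C_llp} and Equation~(\ref{E_sdef}) attach to the shifted Legendre and Schr\"oder polynomials. First I would substitute these three values into the formula of Proposition~\ref{P_cdrec}, collapsing the scalar coefficients to $2uv=2x$ (since $u=1$) and $w=-1$. This turns the identity into
\begin{equation*}
d_{n,n}^{1,x,-1}=2x\sum_{k=0}^{n-1} d_{k,k}^{1,x,-1}\, s_{n-k-1}^{1,x,-1}-d_{n-1,n-1}^{1,x,-1}.
\end{equation*}

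Next I would translate each weighted count into its polynomial avatar. By Corollary~\ref{C_llp} with $\beta=0$ we have $d_{m,m}^{1,x,-1}=\sP_m^{(0,0)}(x)=\sP_m(x)$ for every $m\ge 0$, and by the definition~(\ref{E_sdef}) of the Schr\"oder polynomials, $s_{n-k-1}^{1,x,-1}=S_{n-k-1}(x)$. Replacing each term accordingly yields exactly the asserted recursion. The index ranges cause no trouble: as $k$ runs over $0,\ldots,n-1$ the argument $n-k-1$ runs over $0,\ldots,n-1$, so every Schr\"oder polynomial invoked falls in the range where (\ref{E_sdef}) is defined, and every diagonal Delannoy/Legendre index appearing is nonnegative.

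Since the whole argument is a direct substitution, I do not expect any genuine obstacle; the only points requiring care are purely bookkeeping: checking that $2uv$ becomes $2x$ rather than $2$, that the single factor $w=-1$ supplies the minus sign in front of $\sP_{n-1}(x)$, and that the hypothesis $\beta\ge -n$ of Corollary~\ref{C_llp} is trivially met at $\beta=0$ for all the diagonal indices that occur. The substantive content of the statement therefore resides entirely in Proposition~\ref{P_cdrec}, whose decomposition of a Delannoy path at its last lattice point of the form $(i,i)$ before $(n,n)$ already carries the combinatorial weight of the claim.
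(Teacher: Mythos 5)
Your proposal is correct and matches the paper's own derivation exactly: the paper obtains this corollary by the same direct substitution $u=1$, $v=x$, $w=-1$ into Proposition~\ref{P_cdrec}, invoking Corollary~\ref{C_llp} and Equation~(\ref{E_sdef}) to identify the weighted counts with $\sP_m(x)$ and $S_{n-k-1}(x)$. The bookkeeping you flag ($2uv=2x$, the sign from $w=-1$) is all there is to check, and you have checked it.
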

Using Proposition~\ref{P_SP} the last corollary may be rewritten as
follows.
\begin{corollary}
We have 
$$
\sP_n(x)=2\sum_{k=0}^{n-2} \sP_{k}(x)
\frac{x-1}{n-k}\sP^{(1,-1)}_{n-k-1}(x)+(2x-1)\sP_{n-1}(x)\quad\mbox{and} 
$$
$$
P_n(x)=\sum_{k=0}^{n-2} P_{k}(x)
\frac{x-1}{n-k}P^{(1,-1)}_{n-k-1}(x)+xP_{n-1}(x)\quad\mbox{for
$n\geq 1$.} 
$$
\end{corollary}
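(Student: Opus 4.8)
The plan is to obtain both identities by mechanically substituting the closed form of the Schr\"oder polynomials from Proposition~\ref{P_SP} into the recursion $\sP_n(x)=2x\sum_{k=0}^{n-1} \sP_{k}(x) S_{n-k-1}(x)-\sP_{n-1}(x)$ of the preceding corollary. The one point requiring care is that Proposition~\ref{P_SP} is valid only for indices at least $1$, so the top summand $k=n-1$, whose Schr\"oder index is $0$, must be handled separately. I would first observe that $S_0(x)=1$, since the sole Schr\"oder path from $(0,0)$ to $(0,0)$ is the empty path of weight $1$. Peeling the $k=n-1$ term off the sum contributes $2x\sP_{n-1}(x)$, and combining this with the trailing $-\sP_{n-1}(x)$ yields the coefficient $(2x-1)$ in front of $\sP_{n-1}(x)$ that appears in the statement.

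For the surviving indices $0\leq k\leq n-2$ the Schr\"oder index $n-k-1$ is at least $1$, so Proposition~\ref{P_SP} applies and gives $S_{n-k-1}(x)=\frac{x-1}{(n-k)x}\sP^{(1,-1)}_{n-k-1}(x)$. After inserting this, the outer factor $2x$ meets the denominator $x$ and collapses to $\frac{2(x-1)}{n-k}$, producing exactly the first displayed identity. At $n=1$ the sum is empty and the identity correctly reduces to $\sP_1(x)=(2x-1)\sP_0(x)$, which provides a useful sanity check.

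To derive the second, unshifted identity I would translate the first one through the defining relation $\sP_m(x)=P_m(2x-1)$, which also gives $\sP^{(1,-1)}_m(x)=P^{(1,-1)}_m(2x-1)$. Replacing $x$ by $(x+1)/2$ turns every $\sP$ into the corresponding ordinary polynomial $P$, sends $x-1$ to $(x-1)/2$ and $2x-1$ to $x$. The outer factor of $2$ then cancels the new factor $1/2$ attached to each summand, and the coefficient $2x-1$ of $\sP_{n-1}$ becomes $x$ in front of $P_{n-1}$, giving precisely the second formula for $n\geq 1$.

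Since the whole argument is a substitution, I do not expect a genuine obstacle. The only place demanding attention is the boundary term $k=n-1$: it lies outside the range of Proposition~\ref{P_SP}, and it is exactly where the coefficients $2x-1$ (respectively $x$ in the unshifted version) are born, so getting these constants right is the crux of an otherwise routine computation.
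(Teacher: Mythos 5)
Your proposal is correct and matches the paper's (implicit) argument exactly: the paper simply states that the corollary follows by substituting Proposition~\ref{P_SP} into the preceding recursion, which is precisely your computation, including the separate treatment of the $k=n-1$ term via $S_0(x)=1$ and the change of variable $x\mapsto(x+1)/2$ for the unshifted version.
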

Comparing (\ref{E_Sexp}) with (\ref{E_sP}) we find immediately that the
Schr\"oder polynomials are closely related to the the antiderivatives of
the shifted Legendre polynomials:
\begin{equation}
S_n(x)=\frac{1}{x}\int_0^x \sP_{n-1}(t)\ dt\quad\mbox{holds for $n\geq 1$.}
\end{equation}
By Proposition~\ref{P_SP} this implies
\begin{equation}
\frac{1}{n+1}(x-1)\sP^{(1,-1)}_{n}(x)=\int_0^x \sP_{n}(t)\
dt\quad\mbox{for $n\geq 1$.} 
\end{equation}
This equation may be generalized to the following statement.
\begin{proposition}
\label{P_int}
Let $n$ and $\alpha$ be positive integers. Applying the antiderivative
operator 
$$f(x)\mapsto \int_0^x f(t)\ dt$$ 
to $\sP_{n}(x)$ exactly $\alpha$ times
yields the polynomial
$\frac{1}{(n+1)_{\alpha}}(x-1)^{\alpha}\sP_n^{(\alpha,-\alpha)}(x)$.  
\end{proposition}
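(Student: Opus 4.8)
The plan is to avoid induction on $\alpha$ and instead prove the identity by a single direct computation: integrate the explicit power-series form of $\sP_n$ term by term and match the result against the expansion of $(x-1)^{\alpha}\sP_n^{(\alpha,-\alpha)}(x)$ provided by Proposition~\ref{P_sj}. I would start from (\ref{E_sP}), namely $\sP_n(t)=\sum_{k=0}^n(-1)^{n-k}\binom{n}{k}\binom{n+k}{k}t^k$. The antiderivative operator sends $t^k$ to $x^{k+1}/(k+1)$, so applying it $\alpha$ times sends $t^k$ to $\frac{k!}{(k+\alpha)!}x^{k+\alpha}$; hence the $\alpha$-fold antiderivative of $\sP_n$ equals $\sum_{k=0}^n(-1)^{n-k}\binom{n}{k}\binom{n+k}{k}\frac{k!}{(k+\alpha)!}x^{k+\alpha}$. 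On the other side, Proposition~\ref{P_sj} with $\beta=-\alpha$ gives $(x-1)^{\alpha}\sP_n^{(\alpha,-\alpha)}(x)=\sum_{j=0}^{n+\alpha}(-1)^{n+\alpha-j}x^j\binom{n+\alpha}{j}\binom{n-\alpha+j}{n}$. The whole proof then reduces to comparing these two polynomials coefficient by coefficient.

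For the terms of degree at least $\alpha$ I would set $j=k+\alpha$ with $0\le k\le n$. Using $\binom{n+k}{n}=\binom{n+k}{k}$, the claimed equality of the coefficients of $x^{k+\alpha}$ collapses to the purely factorial identity $\frac{1}{(n+1)_{\alpha}}\binom{n+\alpha}{k+\alpha}=\binom{n}{k}\frac{k!}{(k+\alpha)!}$; writing $(n+1)_{\alpha}=(n+\alpha)!/n!$ shows that both sides equal $\frac{n!}{(k+\alpha)!(n-k)!}$, so this step is routine. Thus the top $n+1$ coefficients of the two expressions agree exactly, and only the terms of degree below $\alpha$ on the Jacobi side remain to be dealt with.

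The main obstacle is exactly this last point. The $\alpha$-fold antiderivative taken from the base point $0$ is divisible by $x^{\alpha}$, so the identity forces the coefficients of $x^j$ in $(x-1)^{\alpha}\sP_n^{(\alpha,-\alpha)}(x)$ to vanish for $0\le j\le\alpha-1$. By the expansion above this is the assertion that $\binom{n-\alpha+j}{n}=0$ for $0\le j\le\alpha-1$, i.e. that $n-\alpha+j$ lies in $\{0,1,\dots,n-1\}$ for each such $j$; this holds precisely when $n\ge\alpha$, since then $n-\alpha+j$ runs through $\{n-\alpha,\dots,n-1\}$. For $\alpha=1$ the condition is automatic and recovers the displayed base case for all $n\ge1$, so I expect the clean version of the statement to carry the hypothesis $n\ge\alpha$ (one should separately track what the low-degree terms contribute when $n<\alpha$). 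In short, once the factorial identity is established, this divisibility by $x^{\alpha}$ is the single genuinely nontrivial ingredient.
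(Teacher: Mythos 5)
Your computation is essentially the paper's, just packaged differently: both arguments expand $(x-1)^{\alpha}\sP_n^{(\alpha,-\alpha)}(x)$ via Proposition~\ref{P_sj} with $\beta=-\alpha$, discard the coefficients of $x^k$ for $k<\alpha$ on the grounds that $\binom{n-\alpha+k}{n}=0$, and match what remains against the term-by-term $\alpha$-fold antiderivative of (\ref{E_sP}). The only structural difference is that you verify the factorial identity $\frac{1}{(n+1)_{\alpha}}\binom{n+\alpha}{k+\alpha}=\binom{n}{k}\frac{k!}{(k+\alpha)!}$ in one shot, whereas the paper reindexes the right-hand side as $\sum_{j}(-1)^{n-j}\frac{x^{j+\alpha}}{(j+1)_{\alpha}}\binom{n}{j}\binom{n+j}{n}$, observes that differentiation lowers $\alpha$ by one, iterates $\alpha$ times, and fixes the constants of integration by the claim that $\sP_n^{(\alpha,-\alpha)}(0)=0$. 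The underlying arithmetic is the same.

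Your closing caveat is not a blemish on your proof but a correct observation about the statement itself: the vanishing $\binom{n-\alpha+j}{n}=0$ for $0\le j\le\alpha-1$ does require $n\ge\alpha$, since for $j<\alpha-n$ the upper index $n-\alpha+j$ is negative and the binomial coefficient is then nonzero. The paper's proof silently assumes this (both its assertion that $\binom{n-\alpha+k}{n}=0$ for all $k<\alpha$ and its assertion that $\sP_n^{(\alpha,-\alpha)}(0)=0$ for all $n,\alpha>0$ fail when $\alpha>n$), and the proposition is in fact false there: for $n=1$, $\alpha=2$ one computes $\frac{1}{6}(x-1)^2\sP_1^{(2,-2)}(x)=\frac{x^3}{3}-\frac{x^2}{2}+\frac{1}{6}$, while the double antiderivative of $\sP_1(x)=2x-1$ based at $0$ is $\frac{x^3}{3}-\frac{x^2}{2}$. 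So the hypothesis $\alpha\le n$ you propose is exactly the needed correction, and under it your argument is complete.
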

\begin{proof}
By Proposition~\ref{P_sj} we have 
$$
(x-1)^{\alpha}\sP^{(\alpha,-\alpha)}_n(x)=
\sum_{k=0}^{n+\alpha} (-1)^{n+\alpha-k} x^k \binom{n+\alpha}{k}
\binom{n-\alpha+k}{n}.$$
Here the binomial coefficient $\binom{n-\alpha+k}{n}$ is zero for
$k<\alpha$. Thus, after introducing $j:=k-\alpha$, we may write 
$$
(x-1)^{\alpha}\sP^{(\alpha,-\alpha)}_n(x)=
\sum_{j=0}^{n} (-1)^{n-j} x^{j+\alpha} \binom{n+\alpha}{j+\alpha}
\binom{n-j}{n}.$$
Thus
\begin{equation}
\label{E_a-a}
\frac{(x-1)^{\alpha}\sP^{(\alpha,-\alpha)}_n(x)}{(n+1)_{\alpha}}
=
\sum_{j=0}^{n} (-1)^{n-j} \frac{x^{j+\alpha}}{(j+1)_{\alpha}} \binom{n}{j}
\binom{n-j}{n}.
\end{equation}
Taking the derivative on both sides yields a right hand side of the same
form, with the value of $\alpha$ decreased by one.
Thus we obtain
$$
\frac{d}{dx}
\frac{(x-1)^{\alpha}\sP^{(\alpha,-\alpha)}_n(x)}{(n+1)_{\alpha}} 
=
\frac{(x-1)^{\alpha-1}\sP^{(\alpha-1,-(\alpha-1))}_n(x)}{
  (n+1)_{\alpha-1}} 
\quad\mbox{for $\alpha\geq 1$}. 
$$
The statement follows from applying this observation $\alpha$ times and
from the fact that $\sP^{(\alpha,-\alpha)}_n(0)=0$ holds for $n,\alpha>0$.  
\end{proof}
Proposition~\ref{P_int} has an interesting consequence for the {\em
  Narayana polynomials $N_n(x)$} defined by 
\begin{equation}
N_n(x) =\sum_{k=1}^n \frac{1}{n} \binom{n}{k-1}\binom{n}{k} x^k.
\end{equation}  
As it was shown by Mansour and Sun~\cite[Theorem 2.1]{Mansour-Sun}, 
the Narayana polynomials are related to the antiderivatives of the
shifted Legendre polynomials via the formula 
\begin{equation}
\label{eq:NsP}
N_n(x)=(x-1)^{n+1} \int_{0}^{\frac{x}{x-1}} P_n(2t-1) \ dt =(x-1)^{n+1} \int_{0}^{\frac{x}{x-1}} \sP_n(t) \ dt.
\end{equation}
Direct substitution of Proposition~\ref{P_int} into \eqref{eq:NsP} yields 
$$
N_n(x)=(x-1)^{n+1} \frac{1}{n+1}\left(\frac{x}{x-1}-1\right) 
\sP^{(1,-1)}\left(\frac{x}{x-1}\right).
$$
\begin{corollary}
The Narayana polynomials $N_n(x)$ are connected to the shifted Jacobi
polynomials $\sP^{(1,-1)}(x)$ by the formula
$$
N_n(x)=\frac{(x-1)^n}{n+1} \sP^{(1,-1)}\left(\frac{x}{x-1}\right). 
$$
\end{corollary}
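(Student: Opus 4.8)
The plan is to obtain the stated identity as a direct simplification of the Mansour--Sun representation \eqref{eq:NsP}, once the single antiderivative appearing there has been evaluated in closed form. First I would invoke Proposition~\ref{P_int} in the special case $\alpha=1$, which asserts that applying the antiderivative operator once to $\sP_n(x)$ produces $\frac{1}{n+1}(x-1)\sP^{(1,-1)}_n(x)$. Equivalently,
$$
\int_0^y \sP_n(t)\ dt=\frac{1}{n+1}(y-1)\sP^{(1,-1)}_n(y),
$$
an identity of polynomials in $y$.

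Next I would specialize this evaluated antiderivative at $y=x/(x-1)$ and insert the result into the second form of \eqref{eq:NsP}, which gives
$$
N_n(x)=(x-1)^{n+1}\cdot\frac{1}{n+1}\left(\frac{x}{x-1}-1\right)\sP^{(1,-1)}_n\left(\frac{x}{x-1}\right).
$$
The final step is purely algebraic: since $\frac{x}{x-1}-1=\frac{1}{x-1}$, one factor of $(x-1)$ cancels against $(x-1)^{n+1}$, leaving $(x-1)^{n}$ and yielding the claimed formula $N_n(x)=\frac{(x-1)^n}{n+1}\sP^{(1,-1)}_n\!\left(\frac{x}{x-1}\right)$.

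The only point requiring a moment's care---rather than a genuine obstacle---is the legitimacy of feeding the rational argument $x/(x-1)$ into an identity proved for polynomial arguments. This is justified because both sides of the antiderivative identity are honest polynomials in $y$: the contribution of the lower endpoint vanishes, as $\sP^{(1,-1)}_n(0)=0$ for $n\geq 1$, a fact already recorded in the proof of Proposition~\ref{P_int}. Hence the identity holds as an equality of polynomial functions and may be specialized at any value of $y$, rational arguments included. With that understood, the corollary follows immediately.
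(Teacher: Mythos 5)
Your proposal is correct and follows exactly the paper's route: specialize Proposition~\ref{P_int} to $\alpha=1$, substitute the resulting closed form of $\int_0^y \sP_n(t)\,dt$ at $y=x/(x-1)$ into the Mansour--Sun identity \eqref{eq:NsP}, and simplify using $\frac{x}{x-1}-1=\frac{1}{x-1}$. Your added remark justifying the substitution of a rational argument into the polynomial identity is a sensible point of care that the paper leaves implicit.
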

I am indebted to Yidong Sun for helping me realize this connection. 

We conclude this section by observing that the polynomials
$\{S_n(x)\}_{n\geq 0}$ {\em almost} form an orthogonal
polynomial sequence. In fact, it is possible to show that the monic
polynomials  
$$p_{n}(x):=\frac{1}{\binom{2n}{n}} \frac{x-1}{x}\sP^{(1,-1)}_n(x)  
$$
satisfy Favard's recursion formula~(\ref{E_Favrec})
$$
p_{n}(x)
=\left(x-\frac{1}{2}\right)p_{n-1}(x)
-\frac{n(n-2)}{4(2n-1)(2n-3)}p_{n-2}(x)  
\quad\mbox{for $n\geq 2$}. 
$$
Unfortunately, substituting $n=2$ yields $\lambda_2=0$ thus the moment
functional will not be quasi-definite.

\section{Concluding remarks}
\label{sec:concl}

One of the main observations in this paper is that the connection
(\ref{E_dp}) between the central Delannoy numbers and Legendre polynomials
may be extended to all Delannoy numbers and the Jacobi polynomials
$\{P_n^{(\alpha,0)}(x)\}_{n\geq 0}$ via (\ref{E_dp1}). An eerily similar
relation was found in~\cite[(3.2)]{Hetyei-dn} between the {\em modified
  Delannoy numbers $\adn_{m,n}$} and the Jacobi polynomials
$\{P_n^{(0,\beta)}(x)\}_{n\geq 0}$. Formula (3.2) in~\cite{Hetyei-dn}  
may be restated as 
\begin{equation}
\label{E_dp2}
\adn_{n+\beta,n}=P_{n}^{(0,\beta)}(3) \quad\mbox{for $\beta\in{\mathbb N}$}.
\end{equation}
The modified Delannoy number $\adn_{m,n}$ is the number of lattice paths
  from $(0,0)$ to $(m,n+1)$ whose steps belong to ${\mathbb N}\times
  {\mathbb P}$. (Here ${\mathbb P}$ denotes the set of positive integers.)
It would be worth exploring whether the weight enumeration of modified
  Delannoy paths (defined as having steps from ${\mathbb N}\times
  {\mathbb P}$) could also be done using shifted Jacobi polynomials, and
  whether there is a {\em duality} between the two theories that extends
  the (``signless'') swapping of the parameters observed above in
  (\ref{E_dp1}) and (\ref{E_dp2}).

The fact that the (shifted) Legendre and Jacobi polynomials are orthogonal with
respect to {\em some} weight function is also a consequence of
Favard's theorem. The question naturally arises whether 
Viennot's combinatorial theory~\cite{Viennot} could be applied to find
a moment functional with respect to
which they are orthogonal. Laguerre polynomials (their usual monic
variant) have a very nice combinatorial interpretation in Viennot's
work. For (the monic version of the) Legendre and Jacobi polynomials,
however, the coefficients 
$\{c_n\}_{n\geq 1}$ and $\{\lambda_n\}_{n\geq 1}$ are not all
integers. For example, the monic variant of the Legendre polynomials is
given by 
$$
p_n(x):=\frac{2^n P_n(x)}{\binom{2n}{n}}.
$$
Favard's recursion formula~(\ref{E_Favrec}) takes the form 
$$
p_n(x)=x p_{n-1}x-\frac{(n-1)^2}{(2n-1)(2n-3)} p_{n-2}(x).
$$
Thus some weights $-c_n$ and $-\lambda_n$ used on Viennot's {\em Favard paths}
have to be fractions, and the formula expressing the moments as total
weights of Motzkin paths seems very hard to evaluate. 
For Legendre polynomials, the horizontal steps will have zero weight,
the northeast steps $(1,1)$ will have weight $1$, the southeast steps
$(1,-1)$ will have weight $k^2/(4k^2-1)$ if they start at a point whose
second coordinate $k$.  Using the fact that Legendre polynomials are orthogonal
with respect to the inner product 
$$\langle f, g\rangle :=\int_{-1}^1 f(x)\cdot g(x)\ dx,$$ 
we know that the moments must be a constant multiple of
$$\int_{-1}^1 \frac{1}{x^n}\ dx=
\left\{
\begin{array}{rr}
0\quad\mbox{if $x$ is odd,}\\
\frac{2}{n+1}\quad\mbox{if $x$ is even.}
\end{array}\\
\right.$$ 
Experimental evaluation of Viennot's Motzkin paths for the first few
even values of $n$ shows that the total weight of these paths is
$1/(n+1)$. A direct combinatorial proof of this fact would be
desirable. It should also be noted that the non-monic variant
$$
q_n(x):=\frac{2^n (2n-1)!! P_n(x)}{\binom{2n}{n}}
$$
of the Legendre polynomials satisfies the recursion formula
\begin{equation}
q_n(x)=(2n-1)x q_{n-1}(x)-(n-1)^2 q_{n-2}(x)
\end{equation}
with integer connecting coefficients. Perhaps a non-monic version of
Favard's theorem is worth considering, to which Viennot's combinatorial
proof could be extended by replacing the weight $x$ on the short
vertical steps in his Favard paths by a weight of $(2n-1)x$ where $n$
is determined by the height of the step. 

We should also mention the combinatorial model of the Jacobi polynomials
constructed by Foata and Leroux~\cite{Foata-Leroux}, whose study was
continued by Leroux and Strehl~\cite{Leroux-Strehl}, \cite{Strehl1},
\cite{Strehl3}. This model was never used to prove the orthogonality of
the Jacobi polynomials, and it seems hard to be used for such a
purpose. On the other hand, finding an interpretation for the Delannoy
numbers in it seems to be an interesting question for future research. 

As seen in Section~\ref{s_rook}, our current work relates the Legendre
and Jacobi polynomials to the classical theory of rook polynomials. The
level of this connection is not satisfying, since we only found a
binomial identity to which the orthogonality of the Laguerre polynomials
(rook theory variant) and the orthogonality of certain Jacobi
polynomials may both be reduced. It would be desirable to find results 
directly relating colored Delannoy paths and rook placements as
combinatorial structures.

Our proof of the orthogonality of the Jacobi
polynomials $P_n^{(\alpha,\beta)}(x)$  for nonzero $\alpha$ depends on
the algebraic formula given in Proposition~\ref{P_abdec}. A more direct
combinatorial extension to the case $\alpha>0$ would be desirable, which
can perhaps be found once the ``duality'' between $P_n^{(\alpha,0)}(x)$
and $P_n^{(0,\beta)}(x)$ is better understood. Although
Proposition~\ref{P_int} seems to give a ``philosophical reason'' to
focus on the polynomials 
$(x-1)^{\alpha}\sP_n^{(\alpha,-\alpha)}(x)/(n+1)_{\alpha}$, these
have a combinatorial interpretation for $\alpha=1$ only, for 
higher values we start having non-integer coefficients. It is remarkable
though that the same polynomials $(x-1)^{\alpha}\sP_n^{(\alpha,-\alpha)}(x)$,
without being divided by  $(n+1)_{\alpha}$ play a key role in our
work and have a combinatorial interpretation for all $\alpha\in
{\mathbb N}$.   

Finally we wish to note that there is a yet to be explored connection
with the theory of {\em Riordan arrays} (see Sprugnoli~\cite{Sprugnoli} for
definition and uses). The weighted Delannoy number $d_{m,n}^{u,v,w}$ is the
coefficient of $t^n$ in $(u+wt)^m/(1-vt)^{m+1}$. An immediate
consequence of this observation is that the $n$-th row $k$-th column
entry in the Riordan array $(1/(1-vt),t(u+wt)/(1-vt))$ is
$d_{k,n-k}^{u,v,w}$. The numbers $d_{m,n}^{1,2,-1}$ appear as entry A1016195 
in Sloane~\cite{Sloane}, listing the entries of the Riordan array
$(1/(1-2t),t(1-t)/(1-2t))$. Our results should allow to write summation
formulas for Jacobi polynomials using the theory of Riordan arrays. 

\section*{Acknowledgments}
I wish to thank Mireille Bousquet-M\'elou for acquainting me with
Viennot's work~\cite{Viennot}. Many thanks to Yongdo Lim for bringing
the work of Fray and Roselle~\cite{Fray-Roselle} to my attention and for
an encouraging and stimulating correspondence. I am indebted to Tom
Koornwinder for helping me understand Romanovski polynomials and to
Yidong Sun for bringing the connection with Narayana polynomials to my
attention. This work was supported
by the NSA grant \# H98230-07-1-0073, and is dedicated to the memory of
Pierre Leroux, who himself did some important
work~\cite{Foata-Leroux,Leroux-Strehl} on the combinatorics of Jacobi
polynomials.

\end{document}